\documentclass[cntp]{ipart}

\RequirePackage{hyperref}

\startlocaldefs

\renewcommand{\theequation}{\thesection.\arabic{equation}}

\usepackage[OT2,T1]{fontenc}
\DeclareSymbolFont{cyrletters}{OT2}{wncyr}{m}{n}
\DeclareMathSymbol{\Sha}{\mathalpha}{cyrletters}{"58}

\newcommand{\be}{\begin{equation}}
\newcommand{\ee}{\end{equation}}
\newcommand{\bear}{\begin{eqnarray}}
\newcommand{\eear}{\end{eqnarray}}
\newcommand{\ie}{{\it i.e.}}
\newcommand{\GCD}{\rm{gcd}}

\theoremstyle{plain}
	
\newtheorem{theorem}{Theorem}
\newtheorem{lemma}[theorem]{Lemma}
\newtheorem{proposition}[theorem]{Proposition}
\newtheorem{corollary}[theorem]{Corollary}
\theoremstyle{definition} 
\newtheorem*{remark}{Remark}
\theoremstyle{definition} 
\newtheorem{definition}{Definition}

\endlocaldefs

\pubyear{\hspace*{-0.65cm}  \colorbox{white}{\color{white}  \parbox[t]{6cm}{ $\;$ \\ [-3mm]  \color{white} Accepted for publication: March 2022}  } }


\begin{document}

\vspace*{-2.7cm}

\hspace*{-0.72cm} \parbox[t]{16cm}{\small  \color{black} Revised version: March 9, 2022}  \\ \indent
\hspace*{-0.72cm} \parbox[t]{16cm}{\small  \color{blue} Accepted for publication: } 

\vspace*{-0.8cm}

\noindent \makebox[9.5cm][l]{\small \hspace*{-2cm} }{\small Fermilab-Pub-20-577-T}  \\  [-1mm]

\vspace*{1.9cm}

\begin{frontmatter}

\title{Diophantine equations with sum of cubes and cube of sum\protect\thanksref{T1}}
\thankstext{T1}{This work was supported by Fermi Research Alliance, LLC under Contract DE-AC02-07CH11359 with the U.S. Department of Energy.}

\begin{aug}
\author{\fnms{Bogdan A.} \snm{Dobrescu}\ead[label=e1]{bdob@fnal.gov}}
\address{Particle Theory Department, Fermilab, Batavia, IL 60510, USA\\ \printead{e1}}
\and
\author{\fnms{Patrick J.} \snm{Fox}
\ead[label=e2]{pjfox@fnal.gov}}
\address{Particle Theory Department, Fermilab, Batavia, IL 60510, USA\\\printead{e2}}
\end{aug}
\received{\sday{27} \smonth{9} \syear{2021}}

\begin{abstract} 
We solve Diophantine equations  of the type $ a \,  (x^3 \!+ \! y^3 \!+ \! z^3 ) = (x \! + \! y \! + \! z)^3$, 
where $x,y,z$ are integer variables,  and the coefficient $a\neq 0$ is rational. 
We show that there are infinite families of such equations, including those where $a$ is any cube or certain rational fractions, that have nontrivial solutions.
There are also infinite families of equations that do not have any nontrivial solution, including those where $1/a = 1- 24/m$  with restrictions on the integer $m$.
The equations can be represented by elliptic curves unless $a = 9$ or 1, and any elliptic curve of nonzero $j$-invariant and torsion group 
$\mathbb{Z}/3k\mathbb{Z}$ for $k = 2,3,4$, or $\mathbb{Z}/2\mathbb{Z} \times \mathbb{Z}/6\mathbb{Z} $ corresponds to a particular $a$.
We prove that for any $a$ the number of nontrivial solutions is at most 3 or is infinite, and for integer  $a$ it is either 0 or $\infty$. 
For $a = 9$, we find the general solution, which depends on two integer parameters. 
These cubic equations are important in particle physics, because they determine the fermion charges under the $U(1)$ gauge group.
\end{abstract}

\begin{keyword}[class=AMS]
\kwd[Primary ]{11D25}   
\kwd[; secondary ]{11G05,   	
11D45,  	
11D85}      
\end{keyword}

\begin{keyword}
\kwd{Cubic Diophantine equations}
\kwd{Elliptic curves}
\kwd{Primitive solutions}
\kwd{Fibonacci numbers}
\end{keyword}

\tableofcontents

\end{frontmatter}

\section{Introduction} \setcounter{equation}{0}
\label{sec:intro}

The study of elliptic curves provides powerful tools for solving cubic Diophantine equations
\cite{Silverman2009,Cremona1997,Smart1998}.
Nevertheless, it remains challenging to find general solutions to parametric families of 
cubic Diophantine equations  \cite{Hardy, Dickson}. 
Here we study the following family of cubic equations in the integer variables $x,y,z$:
\be
a \left(x^3  + y^3  + z^3  \right) =   (x + y + z )^3  ~~,
\label{eq:general3variables}
\ee
where the coefficient $a$ is a rational number.

These homogeneous Diophantine equations 
have applications in particle physics, related to the allowed charges of spin-1/2 particles under a new $U(1)$ gauge group 
\cite{Appelquist:2002mw, Cui:2017juz, Allanach:2019uuu, Costa:2019zzy}.
Since $a=0$ is a trivial case ($ x = - y - z$), we will focus on $a\neq 0$. 
To characterize the solutions to (\ref{eq:general3variables}), 
we introduce some definitions.

\vspace{0.3cm}  

\begin{definition}
Two integers, $x$ and $y$, form a {\it vectorlike pair} if $x + y = 0$.  
\end{definition}

\vspace{-0.2cm}

\noindent
This terminology is common in quantum field theory, where $x$ and $y$ are fermion charges, 
and the gauge field has vector couplings to the fermions if  $x + y = 0$.  

\begin{definition}  \label{def:primitive}
\ A {\it primitive solution} to Eq.~(\ref{eq:general3variables}) is a solution $\{ x,y,z \}$ with $x,y,z \in \mathbb{Z}$ that satisfies the following conditions: \\
1. $x y z \neq 0$. \\
2.  gcd$(x,y,z ) = 1$.    \\
3.  the set  $ \{ x,y,z  \}  $ does not include any vectorlike pair.    
\end{definition}

\vspace{-0.2cm}

\noindent
Note that for $a \neq 1$  the condition $x y z \neq 0$  implies that  the set  $ \{ x,y,z  \}  $ does not include any vectorlike pair. Thus, the third condition is relevant only for $a=1$. 

\smallskip

\begin{definition} \label{def:general}
A {\it general solution} to the Diophantine equation (\ref{eq:general3variables}) for fixed $a$ is a solution 
which depends on at most 2 integer parameters 
such that any primitive solution can be obtained, after removing gcd$(x,y,z)$ and up to a reordering or an overall sign, 
for some values of the parameters.
\end{definition}

Although we refer to Eq.~(\ref{eq:general3variables}) as ``cube of sum proportional to the sum of cubes'', it is useful to note 
that the change of variables
\bear
&& x = \frac{1}{2} \left(  t - u + v \right)   ~~,
\nonumber \\ [2mm]
&&  y = \frac{1}{2} \left(  t + u - v \right)     ~~,
\label{eq:xyztotuv} \\ [2mm]
&&  z = \frac{1}{2} \left(  -t + u + v \right)      ~~,
\nonumber
\eear
where $t,u,v \in \mathbb{Z}$, transforms Eq.~(\ref{eq:general3variables}) into a cubic equation involving the product of the variables and the 
cube of their sum:
  \be
 \left(  a-1 \right)  \left( t+u+v \right)^3 =   24 a \, t \, u \, v  ~~.
\label{eq:general3tuv}
  \ee
This equation has solutions for $a = 1$ only with $tuv = 0$, corresponding to $\{ x,y,z \}$  solutions to Eq.~(\ref{eq:general3variables})  
which include a vectorlike pair. Thus, there are no primitive solutions for $a = 1$. 

Eq.~(\ref{eq:general3variables}) for $a\neq 0,1$ is also related to a parametric family of  elliptic curves over rational numbers. To see that, let us 
change variables through
\bear
&& x     
=    3 a \left( 3 a - X  \right) \, {\cal R}   ~~~,
\nonumber \\ [2mm]
&& y  =   \left(   \frac{9}{2} \,  a^2   \left( a - 1 \right)  + Y    \right)   {\cal R}   ~~~,
\label{eq:anomtoelliptictrans}
\\  [2mm]
&& z =   \left(  \frac{9}{2} \,  a^2  \left( a - 1 \right)  - Y  \right) \, {\cal R}    ~~~,
\nonumber
\eear
where  $X,Y$ are rational variables, and ${\cal R} \in \mathbb{Q}$ 
is an overall normalization chosen such that $x,y,z \in \mathbb{Z}$. 
Eq.~(\ref{eq:general3variables}) then becomes
\be
Y^2  = X^3 - 27 a^3 X - \frac{27}{4} \, a^4   \left( (a-3)^2  - 12 \rule{0mm}{4mm}  \right)   ~~. 
\label{eq:ellipticcurve}
\ee 
The discriminant of this elliptic curve is 
\be
\Delta= - 3^9 \, a^8  \left( a-1  \right)^3 \,  \left( a-9  \right)   ~~.
\label{eq:disc}
\ee
The only relevant value of $a$ that makes the discriminant  
vanish is $a=9$. It turns out that this is also the most interesting case for simple extensions of the Standard Model of particle physics \cite{Appelquist:2002mw}. 

In  Section \ref{sec:infinite-solutions} we show that any equation of the type (\ref{eq:general3variables}) with $a$ given by the ratio of any perfect cubes, or by certain rational fractions, has nontrivial solutions. 
In Section \ref{sec:nosolution} we  prove that there are no primitive solutions for certain 
 infinite families of rational $a$, and then use properties of the elliptic curves to prove that the number of primitive 
 solutions is at most 3 or is infinite.  
 Section \ref{sec:integera} focuses on integer values of $a$, 
including all the solutions to Eq.~(\ref{eq:general3variables})  in the particular case where 
two variables are equal. This provides a further restriction on the number of primitive solutions, which for integer $a$ can be 0 or $\infty$. 
We also present there a 2-parameter solution  for $a=9$, and prove its generality.
Then, we identify solutions for infinite sequences of  $a$ equal to a perfect square
({\it e.g.,} squared Fibonacci numbers of even index), which are relevant for particle physics.
In Section \ref{sec:conc} we summarize our results.

\section{Infinite families of equations with solutions}     
\setcounter{equation}{0} 
\label{sec:infinite-solutions}

The existence of primitive solutions to Eq.~(\ref{eq:general3variables}) depends on the value of $a$.
In this section we identify a few infinite families of equations with rational values for $a$ that allow primitive solutions.

\subsection{Primitive  solutions when $a$ is a rational fraction}

 \begin{theorem}\label{Theorem1}    
  The equation with integer variables $x,y,z$ 
 \be
p^3 \left(x^3  + y^3  + z^3  \right) = q^3 (x + y + z )^3  ~~
\label{eq:general3rational}
 \ee
 has at least one  primitive solution for any  $p, q \in  \mathbb{Z}$ with $q/p \neq 0, 1, -1/2$. 
 \end{theorem}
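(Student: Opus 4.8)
The plan is to exhibit an explicit primitive solution parametrized by $p$ and $q$. The natural first move is to look for solutions in which one variable vanishes—not allowed by primitivity directly, but a good source of intuition—or, better, to exploit the symmetry of the equation. Observe that Eq.~(\ref{eq:general3rational}) is symmetric under permutations of $x,y,z$ and under overall sign, so it is natural to seek a two-parameter ansatz that collapses the cubic to something tractable. I would try the substitution $z = -(x+y) + w$ for a new variable $w$, which makes $x+y+z = w$ and expands $x^3+y^3+z^3$ in terms of $x,y,w$; after using $x^3+y^3 = (x+y)^3 - 3xy(x+y)$, the equation becomes a polynomial identity in which the leading cubic terms in $w$ have coefficient $p^3 - q^3$, and one can hope to solve for $x+y$ and $xy$ in terms of $w$, $p$, $q$. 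Alternatively—and this is the cleaner route—I would substitute into the product form (\ref{eq:general3tuv}), namely $(a-1)(t+u+v)^3 = 24a\,tuv$ with $a = p^3/q^3$, i.e.
\be
(p^3 - q^3)(t+u+v)^3 = 24\, p^3\, t\, u\, v ~~,
\ee
and search for a solution with, say, $t$ and $u$ proportional to simple monomials in $p,q$. A symmetric guess such as $t = u$ reduces this to $(p^3-q^3)(2t+v)^3 = 24 p^3 t^2 v$, a cubic in the ratio $v/t$ that one then tries to make rational by a judicious choice; the factor $24 = 2^3 \cdot 3$ and the factorization $p^3 - q^3 = (p-q)(p^2+pq+q^2)$ strongly suggest setting $v/t$ to absorb the cube $2^3$, leading to a candidate like $t = u = p^2 + pq + q^2$ (or a small multiple thereof) and $v$ proportional to $p-q$ times a factor involving $p$. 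I expect that with the right normalization one lands on a clean closed form; then one converts back via (\ref{eq:xyztotuv}) to get $x,y,z$ as explicit quadratic forms in $p,q$.

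Once a candidate $(x,y,z)$ is in hand, the remaining work is to verify the three primitivity conditions of Definition~\ref{def:primitive}. Condition~2 is automatic after dividing out $\GCD(x,y,z)$, and condition~3 is vacuous since $a = p^3/q^3 \neq 1$ (as $q/p \neq \pm 1$), so by the remark following Definition~\ref{def:primitive} it suffices to check condition~1, that $xyz \neq 0$. This is where the excluded values $q/p = 0, 1, -1/2$ must enter: presumably the explicit formulas for $x,y,z$ each factor as a product of linear forms in $p,q$, and the forbidden ratios are exactly those at which one of the three factors vanishes (for instance $q = 0$, or $p = q$, or $2q + p = 0$). So the final step is simply to read off the linear factors of $x$, $y$, $z$ from the explicit solution and confirm that none of them vanishes precisely when $q/p \notin \{0, 1, -1/2\}$.

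The main obstacle is finding the right ansatz in the first place: the equation has a two-dimensional solution variety (it is rational, being either a conic bundle or related to an elliptic curve with a rational point), so a blind search is hopeless, but the structure—the factor $24a$, the cube $a = p^3/q^3$, the permutation symmetry—pins down the scaling enough that a short trial-and-error with monomial ansätze should succeed. A secondary subtlety is ensuring the solution is genuinely primitive rather than a multiple of a vectorlike configuration or a degenerate one for generic $p,q$; this is handled by the factorization check above, but one must be careful that no two of $x,y,z$ are forced to sum to zero, which again reduces to checking that certain linear forms in $p,q$ do not coincide. Everything after the ansatz is routine polynomial algebra.
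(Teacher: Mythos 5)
Your proposal is a search strategy rather than a proof: the theorem is established by exhibiting an explicit parametric solution, namely the cubic forms $x=(p+2q)(p^2+pq+4q^2)$, $y=-3q(p^2+2pq+3q^2)$, $z=-p^3-3p^2q-6pq^2+q^3$ of Eq.~(\ref{eq:ajk}), after which everything you correctly anticipate (checking $xyz\neq 0$, checking that $x+y=p^3-q^3$, $y+z=-(p+2q)^3$, $z+x=9q^3$ are nonzero, and dividing out the gcd) is routine. Without that explicit solution in hand, nothing has been proved. Worse, the specific ansatz you lean on is provably a dead end: setting $t=u$ in (\ref{eq:general3tuv}) forces, via (\ref{eq:xyztotuv}), two of the original variables to coincide, and by Theorem~\ref{theorema9} a primitive solution with two equal nonzero variables exists for integer $a$ only when $a=9$. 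Concretely, for $p=2$, $q=1$ (so $a=8$) your reduced equation $(p^3-q^3)(2+w)^3=24p^3w$ with $w=v/t$ becomes $7w^3+42w^2-108w+56=0$, which has no rational root, while the actual primitive solutions for $a=8$, such as $\{5,4,3\}$, have three distinct entries. So the ``judicious choice'' you hope for does not exist along that route, and the candidate $t=u=p^2+pq+q^2$ cannot be salvaged.

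A secondary inaccuracy: you attribute all three excluded ratios to vanishing linear factors of $x$, $y$, $z$, but in the paper's solution the exclusion $q/p\neq 1$ arises because $x+y=p^3-q^3$ would vanish (equivalently, $a=1$ admits no primitive solutions at all, by (\ref{eq:general3tuv})), not because one of $x,y,z$ vanishes; only $q/p=0$ and $q/p=-1/2$ correspond to vanishing factors of $y$ and of $x$, respectively. The verification template you describe is the right one, but the entire content of the theorem is the parametrization itself, which your proposal does not supply.
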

\begin{proof}
Let us express the variables as cubic polynomials in $p$ and $q$ as follows:
\bear
&&  x =  \left( p + 2 q  \right)  \left( p^2 + p q + 4 q^2 \right)  ~~, 
\nonumber \\ [2mm]
&&  y =  -3 q \left(p^2  + 2 p q +  3 q^2\right)     ~~,  
 \label{eq:ajk} \\ [2mm]
&&  z =    - p^3 - 3 p^2 q - 6 p q^2 + q^3    ~~.
 \nonumber
\eear
It can be checked that the above expressions represent a solution to Eq.~(\ref{eq:general3rational}) for any 
$p,q$. The necessary conditions $x\neq 0$ and $y\neq 0$   imply $p \neq -2 q$ and $q \neq 0$, while
$z\neq 0$ is satisfied for any rational $p/q$.
To identify the primitive solutions, it remains to impose that no two variables form a vectorlike pair, {\it i.e.},
\bear
&&  x + y  =   p^3 -  q^3  \neq 0 ~~,
\nonumber \\ [2mm]
&&  y + z = -  \left( p + 2q \right)^3   \neq 0 ~~,  
\\ [2mm]
&&  z + x =  9 q^3 \neq 0 ~~,   
 \nonumber 
\eear
so that $p \neq q$.  If  {\rm \GCD}$(x,y,z ) \neq 1$, then $\{ x,y,z \}$ is not a primitive solution. In that case, 
a primitive solution is $\{ x,y,z \}/${\rm \GCD}$(x,y,z) $ with $x,y,z$ given in (\ref{eq:ajk}).
\end{proof}
\begin{remark}
The solution constructed in (\ref{eq:ajk}) is not generically unique. For example, $p = 2$, $q = 1$, ({\it i.e.}, $a = 8$) leads to 
$\{ x,y,z \} =\{40, -33, -31\}$, but the simpler solution $\{ 5,4,3\}$ is not included in  (\ref{eq:ajk}).
\end{remark}

\begin{proposition}\label{Proposition-a}
Equation {\rm (\ref{eq:general3variables})}  has at least one primitive solution iff $\, a$ is of the form  
    \be
a = \frac{(s+2)^3}{6 r^2 + s^3 + 2}     
\label{eq:arsfamily}
  \ee
for any $r, s \in \mathbb{Q}$ satisfying $s \neq 0$,  $|r| \neq 1$ and $r \neq  \pm (s+1) $.
\end{proposition}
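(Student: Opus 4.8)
The plan is to recognize that the family (\ref{eq:arsfamily}) is precisely the image, under the scale-invariant map $a=(x+y+z)^3/(x^3+y^3+z^3)$, of the two-parameter family of triples $\{x,y,z\}=\{1-r,\,s,\,1+r\}$ with $r,s\in\mathbb{Q}$. Indeed, such a triple has $x+y+z=s+2$, and the elementary identity $(1-r)^3+(1+r)^3=2+6r^2$ gives $x^3+y^3+z^3=6r^2+s^3+2$; hence Eq.~(\ref{eq:general3variables}) holds exactly for $a$ equal to the right-hand side of (\ref{eq:arsfamily}). The proposition then reduces to checking that, after clearing denominators and removing \GCD$(x,y,z)$, such a triple is primitive in the sense of Definition~\ref{def:primitive} if and only if $r,s$ avoid the three stated loci.

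For the ``if'' direction I would start from $r,s\in\mathbb{Q}$ with $s\neq 0$, $|r|\neq 1$, $r\neq\pm(s+1)$, take $N$ a common denominator of $r$ and $s$, and reduce $(N(1-r),\,Ns,\,N(1+r))$ by its gcd to an integer triple $\{x,y,z\}$; by homogeneity this solves (\ref{eq:general3variables}) with the same $a$ and has gcd $1$. The remaining conditions of Definition~\ref{def:primitive} are read off directly: $xyz\neq 0 \iff 1-r\neq 0,\ s\neq 0,\ 1+r\neq 0$, that is $|r|\neq 1$ and $s\neq 0$; and among the three pair sums, $(1-r)+(1+r)=2$ is never zero, while $x+y=0\iff r=s+1$ and $y+z=0\iff r=-(s+1)$, so there is no vectorlike pair. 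One also checks that $a$ in (\ref{eq:arsfamily}) is finite: $6r^2+s^3+2=(1-r)^3+s^3+(1+r)^3$ is a sum of three nonzero rational cubes, hence nonzero by the exponent-$3$ case of Fermat's Last Theorem.

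For the ``only if'' direction I would take any primitive integer solution $\{x,y,z\}$ and place one variable, say $y$, in the middle slot. Since no pair is vectorlike, $\lambda:=(x+z)/2\neq 0$; put $r:=(z-x)/(x+z)$ and $s:=2y/(x+z)$, both rational. A one-line check gives $(x,y,z)=\lambda\,(1-r,\,s,\,1+r)$, so by scale invariance $a=(x+y+z)^3/(x^3+y^3+z^3)$ equals the right-hand side of (\ref{eq:arsfamily}); here $x^3+y^3+z^3\neq 0$, since otherwise $(x+y+z)^3=0$ forces $x+y+z=0$, whence $x^3+y^3+z^3=3xyz\neq 0$, a contradiction. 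Finally the required strict inequalities are inherited from primitivity: $s\neq 0\iff y\neq 0$; $r=1\iff x=0$ and $r=-1\iff z=0$; $r=s+1\iff x+y=0$ and $r=-(s+1)\iff y+z=0$.

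The argument is short once the substitution is in hand, so I do not expect a genuine obstacle; the only delicate point is matching the excluded $(r,s)$-locus term by term with the failure of the conditions in Definition~\ref{def:primitive} --- in particular, noticing that for a triple $\{1-r,\,s,\,1+r\}$ the pair sum $x+z$ equals the constant $2$, which is why one pair is automatically admissible and the constraints read $s\neq 0$, $|r|\neq 1$, $r\neq\pm(s+1)$ rather than something symmetric in $x,y,z$. As a consistency check, $r=\pm(s+1)$ makes the numerator and denominator of (\ref{eq:arsfamily}) both equal to $(s+2)^3$, giving $a=1$, in accord with the earlier remark that $a=1$ admits no primitive solution, while $|r|=1$ forces a vanishing entry in the triple.
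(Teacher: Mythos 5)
Your proposal is correct and follows essentially the same route as the paper: the triple $\{1-r,\,s,\,1+r\}$ is, up to ordering and an overall scale, exactly the paper's parametrization $x=s_1r_2$, $y=s_2(r_2+r_1)$, $z=s_2(r_2-r_1)$, and both directions (including the use of Fermat's Last Theorem for exponent $3$ to keep the denominator nonzero, and the inverse map $r,s$ built from ratios of the variables) match the paper's argument. No gaps.
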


\begin{proof}
Put  $r \! = \! r_1/r_2$,  $s \! = \! s_1/s_2$,  with $r_1, r_2, s_1, s_2   \in \mathbb{Z}$,  $r_2  s_2,  \neq 0$ gcd$( r_1,r_2 ) \! = \! $ gcd$( s_1, s_2 ) \! = \! 1$.
The triple $\{ x,y,z \} = \{ s_1 r_2  \, ,   s_2 ( r_2  + r_1 )   \, ,   s_2 ( r_2   -  r_1 )  \}  $
is a solution to  Eq.~(\ref{eq:general3variables}) for $a$ given by (\ref{eq:arsfamily}). 
This is not a vectorlike solution (see Definition \ref{def:primitive}) iff $r \neq  \pm (s+1)$.
The  $xyz \neq 0$ condition is satisfied for  any  $r \neq \pm1$, $s \neq 0$.
Note that gcd$( x,y,z )  =  2^\delta \,$gcd$(s_2, r_2 )$, where $\delta = 1$ if $s_1$ is even and $r_1 r_2$ is odd, and $\delta = 0$ otherwise.  Thus,  
\be
\frac { \{ \, s_1 r_2  \;  ,  \,  s_2 ( r_2  + r_1 )   \;  ,  \,   s_2 ( r_2   -  r_1 )  \,  \}   } { 2^{\delta} \,  {\rm gcd}( s_2, r_2 ) }
\label{eq:arsfamilyFermat}
\ee
is a  primitive  solution.
To prove the converse, for any primitive solution $\{ x,y,z \}$, the value of $a$ extracted from Eq.~(\ref{eq:general3variables}) is identical to 
(\ref{eq:arsfamily}) with 
\be
r = \frac{x - y}{x + y}
\;\;\; , \;\;\; 
s = \frac{2 z}{x + y}  ~~~.
\ee 
Since $xyz (x+y)(y+z) \neq 0$, the values $r =\pm 1$, $s = 0$, and $r = \pm (s+1) $ are not allowed.
Given that $(6 r^2 + s^3 + 2)(x+y)^3 =  x^3 + y^3 + z^3$, 
Fermat's Last Theorem (FLT)  for exponent 3 ensures that the denominator of (\ref{eq:arsfamily})  is nonzero.
\end{proof}

\begin{remark}
For $s = 1$  and  $r = p/q$ with $p, q \in \mathbb{Z}$ coprime,  (\ref{eq:arsfamily}) gives the  infinite family of $a$ values
\be
a = \frac{9 q^2}{ 2 p^2 +  q^2}    ~~,
\label{eq:afrac2}
\ee
which have primitive solutions iff $q\neq 0$ and $\pm p/q \neq 1,2$: $\{ x,y,z \} = \{ p+q \, , \, q  \, , \,  - p + q  \}$.
Within this family, there are only three integer values of $a$ that allow primitive solutions:
$p \! = \! 1$, $q  \! = \! 2$ gives $a=6$ and  the solution $\{ 3, 2,  1 \}$;
$p \! = \! 1$, $q \! = \! 4$ gives $a=8$ and $\{ 5, 4, 3 \}$;
$p \! = \! 0$, $q \! = \! 1$ gives $a=9$ and  $\{ 1,1,1 \}$.

An infinite family with the coefficient $a$ given by the reciprocal of an integer is obtained 
from Proposition \ref{Proposition-a} by setting $r = p/q$, $s = -2 + 1/q$, with $q  \notin \{ 0, \pm p, 1\pm p\}$.
The ensuing family of equations, spanned by the two integers $p,q$, is 
\be
x^3  + y^3  + z^3 =  \left[ 6 q\left( p^2  - (q - 1)^2 \right) + 1  \rule{0mm}{3.9mm} \right] (x + y + z )^3 ~~,
\ee 
and has the primitive solution $\{ 1 - 2 q , \, p + q    , \, - p + q    \} $.

Many other simple families may be obtained from   Proposition \ref{Proposition-a}. 
For example, 
$s = - 1$  gives $1/a = 6 r^2 + 1$, $s = -3 $ gives $1/a = 25 - 6 r^2$,  and $s = 2$ gives $a = 32/ (3 r^2 + 5)$.
Some infinite families of integer $a$ with primitive solutions are derived in Section~\ref{sec:integera-inf}. 
Despite the versatility of (\ref{eq:arsfamily}), the methods employed in this subsection are not helpful for proving Theorem \ref{Theorem1}
because it is nontrivial to find the solutions (\ref{eq:ajk}). 
\end{remark}

\subsection{General solution for two variables} 
\label{sec:n2variables}

In this subsection we seek all the solutions to Eq.~(\ref{eq:general3variables}) that have one of the variables equal to zero. 
It is sufficient to consider the Diophantine equation in two variables:
  \be
a \left(x^3  + y^3   \right) =   (x + y )^3  ~~,
\label{eq:aan2}
  \ee
where  $a \in \mathbb{Q} $.   
Trivial (vectorlike) solutions with $y = -x$ exist for any $a$. If $xy = 0$, then solutions with 
a single nonzero variable exist only for $a =1$. Leaving aside these trivial cases, the following Proposition provides the full solution to 
the equation with ``cube of sum proportional to the sum of cubes" 
in two variables. As we seek nontrivial solutions, we remove 
an $x+y$ factor, so that  Eq.~(\ref{eq:aan2}) is equivalent to (\ref{eq:an2xy}) given below.  

\begin{proposition}\label{Proposition1}
The equation with integer variables $x, y$
  \be
(a-1) (x^2 + y^2) = (a+2) \,  x\, y      
~~,
\label{eq:an2xy}
  \ee
where $a \in \mathbb{Q}$, has solutions with $xy(x+y )\neq 0$  iff $a$ is of the form
\be
a = \frac{4}{1+ 3 \, \alpha^2 }   ~~,
\label{eq:an2rational}
\ee
where $\alpha$ is any rational number except $\pm 1$. 
\end{proposition}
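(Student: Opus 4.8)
The plan is to treat $a$ as a quantity determined by a solution $(x,y)$, and to linearize Eq.~(\ref{eq:an2xy}) by passing to the sum and difference of the variables. For the ``only if'' direction I would start from a solution with $xy(x+y)\neq 0$ and solve Eq.~(\ref{eq:an2xy}) for $a$: collecting the terms proportional to $a$ gives $a\,(x^2 - xy + y^2) = (x+y)^2$, and since $x^2 - xy + y^2 = \tfrac14\bigl[(x+y)^2 + 3(x-y)^2\bigr]$ is positive for $(x,y)\neq(0,0)$ this is never a division by zero, so
\[
a \;=\; \frac{(x+y)^2}{x^2 - xy + y^2}\;.
\]
Setting $\alpha = (x-y)/(x+y)$, which is a well-defined rational number because $x+y\neq 0$, the same identity yields $1 + 3\alpha^2 = \bigl[(x+y)^2 + 3(x-y)^2\bigr]/(x+y)^2 = 4\,(x^2-xy+y^2)/(x+y)^2$, hence $a = 4/(1+3\alpha^2)$. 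Finally $\alpha = 1$ forces $y = 0$ and $\alpha = -1$ forces $x = 0$, both excluded by $xy\neq 0$, so $\alpha\neq\pm 1$.

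For the ``if'' direction, given $\alpha\in\mathbb{Q}\setminus\{\pm 1\}$ I would write $\alpha = p/q$ with $p,q\in\mathbb{Z}$ and $q\neq 0$, and take $x = q+p$, $y = q-p$. Then $x+y = 2q\neq 0$, while $x\neq 0$ and $y\neq 0$ precisely because $\alpha\neq -1$ and $\alpha\neq 1$ respectively, so $xy(x+y)\neq 0$. A one-line computation gives $x^2+y^2 = 2(p^2+q^2)$ and $xy = q^2 - p^2$, hence $x^2 - xy + y^2 = 3p^2 + q^2$ and $(x+y)^2/(x^2-xy+y^2) = 4q^2/(3p^2+q^2) = 4/(1+3\alpha^2) = a$; rearranging this identity gives exactly Eq.~(\ref{eq:an2xy}) for this value of $a$, so $(x,y)$ is the desired solution. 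The whole argument is cleanest in the variables $u = x+y$, $v = x-y$, in which Eq.~(\ref{eq:an2xy}) reads $(4-a)\,u^2 = 3a\,v^2$, i.e.\ $a = 4u^2/(u^2+3v^2) = 4/\bigl(1+3(v/u)^2\bigr)$, so $\alpha$ is simply the ratio $v/u$.

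The only point that needs care --- bookkeeping rather than a genuine obstacle --- is the translation of the three non-degeneracy conditions into constraints on $\alpha$: requiring $x+y\neq 0$ is what makes $\alpha$ finite and rational to begin with, and the two conditions $x\neq 0$, $y\neq 0$ correspond exactly to the two excluded values $\alpha = -1$ and $\alpha = 1$. Beyond that, everything is routine algebra; the structural fact making it work is that $x^2-xy+y^2$ is the positive-definite form $\tfrac14\bigl[(x+y)^2+3(x-y)^2\bigr]$, so $a$ is automatically well-defined and of the claimed shape, while conversely every admissible $\alpha$ is realized by the explicit integer pair $(q+p,\,q-p)$.
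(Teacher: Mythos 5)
Your proof is correct and follows essentially the same route as the paper: both directions rest on rewriting the equation as $4/a = 1 + 3\bigl((x-y)/(x+y)\bigr)^2$, identifying $\alpha$ with $(x-y)/(x+y)$, and realizing an admissible $\alpha = p/q$ by the pair $x = q+p$, $y = q-p$ (the paper's Corollary~\ref{Cor2} records exactly this parametrization). Your version is, if anything, marginally cleaner in that it handles $\alpha = 0$ uniformly rather than as a separate case.
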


\begin{proof}
If $a$ is given by  (\ref{eq:an2rational}), then it is straightforward to check that any integers $x$, $y \neq 0$ that satisfy
\be
\frac{x}{y} = \frac{\alpha\pm1}{1\mp\alpha}      \;\;\;\;   {\rm for } \;\,\;  \forall  \, \alpha\neq 0,\pm 1 \; , \;\;\;\;    \mathrm{or}  \;\;\;\;  x=y \;\;\;  \mathrm{for}  \;\;\;\alpha=0
\label{eq:xyr}
\ee 
provide a solution to (\ref{eq:an2xy}). 

To prove the reverse, we note that if $\{ x, y\}$ is a solution to  (\ref{eq:an2xy}) with $x+y \neq 0$,
then the equation can be written as
\be
\frac{4}{a } = 1+3 \left( \frac{x-y}{x+y} \right)^{\! 2} ~~,
\ee
so that $a$ is given by  (\ref{eq:an2rational}) with 
\be
\alpha= \pm \frac{x-y}{x+y}  ~~.
\label{eq:rxy}
\ee
\end{proof}

The first part of Proposition~\ref{Proposition1} proves in particular that there is an infinite family of  
rational values for $a$, given in (\ref{eq:an2rational}), for which
(\ref{eq:general3variables}) has solutions with $xy(x+y )\neq 0$ and $z=0$.
The following corollary shows that the situation is different when $a \in \mathbb{Z}$.

\begin{corollary}\label{Cor1}
Up to a reordering or an overall integer rescaling, there are only two solutions to
Eq.~(\ref{eq:aan2}) with $xy(x+y )\neq 0$  when the coefficient $a$ is an integer: $x = 2$, $y = 1$ for $a = 3$, and $x = y = 1$  for $a = 4$.   
\end{corollary}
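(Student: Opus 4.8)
The plan is to leverage Proposition~\ref{Proposition1}, which already characterizes all rational $a$ admitting solutions with $xy(x+y)\neq 0$ via $a = 4/(1+3\alpha^2)$ for rational $\alpha \neq \pm 1$. Writing $\alpha = m/n$ with $m,n$ coprime integers (and $\alpha \neq \pm 1$ so $|m|\neq|n|$), the formula becomes $a = 4n^2/(n^2 + 3m^2)$. The task reduces to determining for which coprime pairs $(m,n)$ this is an integer, and then reading off the corresponding solution from Eq.~(\ref{eq:xyr}).

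First I would analyze the integrality condition $(n^2 + 3m^2) \mid 4n^2$. Since $\gcd(m,n)=1$, I would argue $\gcd(n^2+3m^2, n^2) = \gcd(3m^2, n^2)$ divides $3$, so $n^2 + 3m^2$ divides $12m^2$ as well; combining, $n^2+3m^2$ divides $\gcd(4n^2, 12 m^2)$-type quantities and hence divides a small fixed number. More cleanly: $n^2 + 3m^2$ divides $4n^2 + 12m^2 = 4(n^2+3m^2)$ trivially and divides $4n^2$, so it divides $12m^2$; since any common prime of $n^2+3m^2$ with $n$ must divide $3m^2$ hence equals $3$, and similarly common primes with $m$ divide $3n^2$ hence equal $3$, the only possible prime factor of $n^2+3m^2$ beyond what is forced is small. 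Carefully bounding, I expect $n^2 + 3m^2 \in \{1,2,3,4,6,12\}$ after handling the factor-of-$3$ subtleties, which leaves only finitely many $(m,n)$ to check by hand. The surviving cases with $|m|\neq|n|$, $\gcd(m,n)=1$ should be $(m,n) = (0,\pm1)$ giving $a=4$, and $(m,n)=(\pm1, \pm1)\cdot(\text{excluded})$, and $(m,n) = (\pm 1, \pm 3)$ giving $n^2+3m^2 = 12$, $a = 36/12 = 3$.

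Then for each surviving case I would compute $x/y$ from Eq.~(\ref{eq:xyr}). For $\alpha = 0$ one gets $x = y$, i.e. $\{x,y\} = \{1,1\}$ with $a = 4$. For $\alpha = 1/3$ one gets $x/y = (\alpha+1)/(1-\alpha) = (4/3)/(2/3) = 2$, i.e. $\{x,y\}=\{2,1\}$ with $a=3$; the other sign choices of $\alpha = \pm 1/3$ or swapping numerator/denominator just give reorderings or sign flips, which are identified in the statement. This matches exactly the two claimed solutions.

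The main obstacle is the bookkeeping in the divisibility argument: correctly tracking the power of $3$ in $n^2+3m^2$ versus in $4n^2$ when $3\mid n$ (note $3\nmid m$ then since $\gcd(m,n)=1$, so $3\parallel n^2+3m^2$ only if... one must check $v_3(n^2+3m^2)$), and ensuring no large solutions slip through. A clean way to close this is to note that $a = 4n^2/(n^2+3m^2) < 4$ always and $a>0$, so $a \in \{1,2,3\}$ as a positive integer less than $4$; $a=1$ is impossible since it forces $3m^2 = 3n^2$ hence $|m|=|n|$, contradicting $\alpha\neq\pm1$ (and coprimality forces $|m|=|n|=1$); $a=2$ gives $2n^2 + 6m^2 = 4n^2$, i.e. $n^2 = 3m^2$, impossible for integers unless $m=n=0$; $a=3$ gives $3n^2+9m^2 = 4n^2$, i.e. $n^2 = 9m^2$, so $n = \pm 3m$, forcing $m = \pm1$, $n=\pm3$. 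This $a<4$ observation makes the whole argument short and removes the need for an explicit finite search, so I would organize the proof around it rather than around raw divisibility.
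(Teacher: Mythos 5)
Your final organization is essentially the paper's own proof: it bounds $0 < a \leq 4$ from $a = 4/(1+3\alpha^2)$ and then checks the integer values one by one ($a=1$ excluded by $|\alpha|=1$, $a=2$ by irrationality of $\alpha$, $a=3$ giving $\alpha=\pm 1/3$ and $x/y=2$, $a=4$ giving $\alpha=0$ and $x=y$), so the two approaches coincide and the preliminary divisibility analysis is unnecessary. One small correction: $a = 4n^2/(n^2+3m^2)$ is not strictly less than $4$ --- it equals $4$ exactly when $m=0$, i.e.\ $\alpha=0$, which is an allowed value --- so the case list in your closing paragraph must be $a\in\{1,2,3,4\}$ rather than $\{1,2,3\}$; since you correctly treat $\alpha=0$ earlier in the proposal, this is only an internal inconsistency to fix, not a missing case.
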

\begin{proof}
Eq.~(\ref{eq:an2rational}) implies $0 < a \leq 4$. From Eq.~(\ref{eq:an2xy}) follows that for $a = 1$ there are no solutions with $xy(x+y )\neq 0$.
Also, there are no solutions for $a=2$ because in that case there is no rational value of $\alpha$ that satisfies  (\ref{eq:an2rational}). 
For $a = 3$, $x/y = 2$ or 1/2, while $a = 4$ gives $x=y$. 
\renewcommand{\qedsymbol}{}
\end{proof}

\begin{corollary}\label{Cor2}
The general solution $\{ x, y\}$  to Eq.~(\ref{eq:aan2})  with  $xy(x+y )\neq 0$, up to an overall rescaling by an 
integer and an $x\leftrightarrow y$ interchange, is $\{ x,y \} = \{ p+q \, , \, p - q\}$,
where $p,q \in \mathbb{Z}$ such that $\alpha = q/p \neq \pm 1$ in (\ref{eq:an2rational}).  
\end{corollary}
\begin{proof}
Solving (\ref{eq:rxy}) for $x/y$ gives  (\ref{eq:xyr}) or the same with $x$ and $y$ interchanged.
Replacing $\alpha = q/p$ in (\ref{eq:rxy})  gives $x =  p + q$,  $y =  p - q  $  up to an overall normalization.
\renewcommand{\qedsymbol}{}
\end{proof}
 
Some numerical examples with noninteger $a$, are $p = 2$, $q = 3$, 
which gives  $a=12/7$ and the solution $\{ 5, 1\}$, and  $p = 1$, $q = 2$,  which gives  $a=16/7$ and the solution $\{ 3, 1\}$.

\section{The number of primitive solutions}
\setcounter{equation}{0}
\label{sec:nosolution}

For certain values of $a$ it is possible to prove that there exist no primitive solutions to Eq.~(\ref{eq:general3variables}).  
We invoke the results of E.~Dofs \cite{Dofs1, Dofs2} to show that there are infinite families of rational $a$ (including certain integer values)  for which there are no solutions. We then use of elliptic curves to determine the number of primitive solutions for fixed $a$.

\subsection{Infinite families of equations with no primitive solution}
\label{sec:a-2425}

Using the change of variables presented in  (\ref{eq:xyztotuv}), 
 the ``cube of sum proportional to the sum of cubes" equation can be related to a homogenous cubic equation, 
shown in (\ref{eq:general3tuv}), 
 where the cube of a sum of 3 integers is proportional to their product. 
This allows us to invoke existing results to show that there is a  set of rational $a$ for which there are no solutions,
including  the integer values $a=-23, -5, -3, -2, 4, 7, 25$. 

\begin{theorem}\label{theoremnosolnp}
There are no primitive solutions to Eq.~(\ref{eq:general3variables}) if 
\be
a = \frac{p^{n}}{p^{n}-24}   ~
\label{eq:apn}
\ee 
for $a \neq -1/11$, $p\equiv 2\!\! \mod \! 3$ prime, $n \ge 1$ integer 
with $3\nmid n$, and all prime factors of $p^{n}-27$ congruent to $2\!\! \mod 3$.
\end{theorem}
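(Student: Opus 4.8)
The plan is to transfer the problem, via the change of variables (\ref{eq:xyztotuv}), onto the companion equation (\ref{eq:general3tuv}), and then to identify the resulting curve with one already studied by E.~Dofs. For $a = p^{n}/(p^{n}-24)$ one has $a-1 = 24/(p^{n}-24)$ and $24a = 24\,p^{n}/(p^{n}-24)$, so (\ref{eq:general3tuv}) reduces to
\be
(t+u+v)^{3} = p^{n}\, t\, u\, v~~.
\label{eq:tuvpn}
\ee
Since $t=x+y$, $u=y+z$ and $v=z+x$ under (\ref{eq:xyztotuv}), a primitive solution $\{x,y,z\}$ --- which for $a\neq 1$ has $xyz\neq 0$ and hence no vectorlike pair --- is carried to an integer triple $(t,u,v)$ with $tuv\neq 0$, while the triples with $tuv=0$ are exactly those arising from an $\{x,y,z\}$ that does contain a vectorlike pair. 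It therefore suffices to prove that (\ref{eq:tuvpn}) has no integer solution with $tuv\neq 0$.

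Next I would observe that (\ref{eq:tuvpn}), regarded in $\mathbb{P}^{2}$, is a nonsingular plane cubic whenever $p^{n}\neq 0,27$; under the stated hypotheses $p^{n}-27$ is a nonempty product of primes $\equiv 2\bmod 3$, so it is nonzero, and $p^{n}=27$ is excluded by $p\equiv 2\bmod 3$. Its three ``trivial'' points $[1:-1:0]$, $[1:0:-1]$, $[0:1:-1]$ are flexes lying on the line $t+u+v=0$, so they form a $\mathbb{Z}/3\mathbb{Z}$ subgroup of the rational points. Putting the cubic in Weierstrass form --- it is the curve (\ref{eq:ellipticcurve}) for $a=p^{n}/(p^{n}-24)$, whose discriminant is then a nonzero constant times $p^{8n}(p^{n}-27)$ --- a primitive solution of Eq.~(\ref{eq:general3variables}) becomes a rational point of this elliptic curve lying outside the trivial $\mathbb{Z}/3\mathbb{Z}$.

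Equation (\ref{eq:tuvpn}) --- the cube of a sum equal to a multiple of the product --- is precisely the kind of equation treated by Dofs \cite{Dofs1, Dofs2} by a Fermat-style descent carried out both in $\mathbb{Z}$ and in the ring of Eisenstein integers $\mathbb{Z}[\omega]$, $\omega=e^{2\pi i/3}$, where the residues mod $3$ of the prime factors of $p^{n}$ and of $p^{n}-27=p^{n}-3^{3}$ control the solubility of each descendant equation. The hypotheses of the theorem are exactly those that close off every branch of the descent: $p\equiv 2\bmod 3$ makes $p$ inert in $\mathbb{Z}[\omega]$, $3\nmid n$ keeps $p^{n}$ a non-cube there, and ``every prime factor of $p^{n}-27$ congruent to $2\bmod 3$'' kills the last branch. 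Under these conditions the group of rational points is no larger than the trivial $\mathbb{Z}/3\mathbb{Z}$, so (\ref{eq:tuvpn}) has no solution with $tuv\neq 0$ and Eq.~(\ref{eq:general3variables}) has no primitive solution for such $a$. The value $a=-1/11$, i.e.\ $p^{n}=2$, is genuinely exceptional and must be removed: for $p^{n}=2$ the cubic (\ref{eq:tuvpn}) carries an extra rational $2$-torsion point, $(t,u,v)=(4,-1,-1)$, which pulls back under (\ref{eq:xyztotuv}) to the primitive solution $\{x,y,z\}=\{-3,2,2\}$ of $-\frac{1}{11}(x^{3}+y^{3}+z^{3})=(x+y+z)^{3}$.

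The hard part is not a new computation but the careful bookkeeping against Dofs's framework: one must verify that (\ref{eq:tuvpn}) is literally the equation his descent handles, that the three listed conditions --- on $p$, on $n$, and on the \emph{entire} prime factorization of $p^{n}-27$ --- are jointly equivalent to the vanishing of the relevant descent obstruction, and that the rational points surviving the descent are only the three vectorlike ones rather than some unexpected genuine solution. One must also dispose of the borderline parameter values --- confirming that $p^{n}=27$ cannot occur under the hypotheses, and that $p^{n}=2$ is the unique genuine exception among the prime powers satisfying them --- before drawing the conclusion.
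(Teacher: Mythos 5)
Your overall route is the same as the paper's: change variables via (\ref{eq:xyztotuv}) to get $(t+u+v)^3=p^n\,t\,u\,v$ and then appeal to Dofs. But there are two genuine gaps.

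First, your reduction target is wrong, and in a way that makes the strategy fail on a case the theorem covers. You claim it suffices to show that $(t+u+v)^3=p^n\,t\,u\,v$ has \emph{no} integer solution with $tuv\neq 0$, and later that under the hypotheses the rational points form only the trivial $\mathbb{Z}/3\mathbb{Z}$ of flexes. Both claims are false for $p=2$, $n=5$, i.e.\ $a=4$, which satisfies every hypothesis of the theorem ($2\equiv 2\bmod 3$, $3\nmid 5$, $2^5-27=5\equiv 2\bmod 3$): the triple $(t,u,v)=(1,1,2)$ gives $(1+1+2)^3=64=32\cdot 1\cdot 1\cdot 2$, so $tuv=2\neq 0$, yet it pulls back to $\{x,y,z\}=\{1,0,1\}$ with $xyz=0$, hence not primitive. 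The correct statement one must prove is that every solution with $tuv\neq 0$ still has $xyz=0$; the paper does exactly this, isolating the solitary exceptional solution in Dofs' case $N_{01}$ and checking it is non-primitive. Your dichotomy ``$tuv=0$ iff vectorlike'' is fine in one direction, but $xyz\neq 0$ is a strictly stronger condition than $tuv\neq 0$, and the difference is where the $a=4$ case lives.

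Second, the substantive reduction to Dofs' setting is missing. The equation $(t+u+v)^3=p^n\,t\,u\,v$ is \emph{not} ``literally the equation his descent handles'': Dofs treats $P^3+Q^3+cR^3=dPQR$. To get there one must use the pairwise coprimality of $t,u,v$ to conclude that (up to reordering) $t=t_0^3$, $u=u_0^3$, $v=p^k v_0^3$ with $n=3n'-k$, $k\in\{1,2\}$ (this is where $3\nmid n$ enters), arriving at $t_0^3+u_0^3+p^k v_0^3=p^{n'}t_0u_0v_0$ as in (\ref{eq:Dofstuv}); only then do the hypotheses on $p$ and on the prime factors of $F=p^{3n'}-27p^k\cdot p^{n-k}=p^{n}\!\cdot\!(\,\cdots)$, i.e.\ of $p^n-27$, match Dofs' unsolvability criteria for the two cases $k=1,2$. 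You flag this matching as ``bookkeeping'' left to do, but it is the actual content of the proof, and the hand-waved Mordell--Weil framing you substitute for it leads to the false assertion above. (Your identification of the $a=-1/11$ exception and its primitive solution $\{2,2,-3\}$ is correct.)
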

\begin{proof}
For $a \in \mathbb{Q}$ satisfying (\ref{eq:apn}), 
use the change of variables (\ref{eq:xyztotuv}) to rewrite  (\ref{eq:general3variables}) in the form
\be
 \left( t+u+v \right)^3 =   p^n   
 \, t \, u \, v  ~~.
 \label{eq:tuvp1}
\ee
The variables $t,u,v$ are pairwise coprime, which implies that any solution must take the form $t=t_0^3$, $u=u_0^3$, $v=p^{k}v_0^3$, with 
$k=1$ or 2, $n=3n'-k$, 
$p \nmid t_0,u_0$, and the integer variables $t_0,u_0,v_0$ are pairwise coprime.  Thus, (\ref{eq:tuvp1}) becomes
\be
t_0^3 + u_0^3 + p^{k} v_0^3 = p^{n'} t_0 u_0 v_0~.
\label{eq:Dofstuv}
\ee
We now use a theorem of Dofs \cite{Dofs1}, which investigates solutions to equations of the form 
$p^{\omega_1} t_0^3 + p^{\omega_2} u_0^3 + p^{\omega_3} v_0^3 = d\,  t_0 u_0 v_0$.  
Dofs defines a quantity $F=d^3 - 27p^{\omega_1+\omega_2+\omega_3}$,   
and determines the solvability of the above equation when all prime factors of $F$ are $2 \!\! \mod \! 3$
and certain conditions on $F$, $p$, and $d$ are satisfied.  

For Eq.~(\ref{eq:Dofstuv}) these conditions determine that if $k=1$ (case $N_{01}$ in Dofs' proof) there are no nontrivial solutions  
unless $p=2$ and $n'=2$ (\ie\ $a=4$). In that case the only solution is $t_0 = u_0 = v_0=1$, corresponding to  $\{x,y,z\}=\{1,0,1\}$, which  
is not a primitive solution to (\ref{eq:general3variables}).  
If $k=2$ (case $N_{02}$ in Dofs' proof) there are no nontrivial solutions unless $p=2$ and $n'=1$ (\ie\ $a=-1/11$) when the only solution is $\{t_0,u_0,v_0\}=\{1,1,-1\}$, which corresponds to the primitive solution $\{x,y,z\}= \{-2,3,-2\}$.
\end{proof}

\begin{remark}
The values $a=-23, -2, 4, 25$ are the only integer ones that yield
$24a/(a-1)$ as a non-cubic power of a single prime: $23$, $2^4$, $2^5$, $5^2$, respectively.  Thus, $a=-23$, which corresponds to $n=n'=1$, $k=2$, and $a=-2$ ($n'=k=2$) can be seen to have no solution. For $a=4$ ($n'=2, k=1, p=2$) there is a solitary solution to (\ref{eq:Dofstuv}) which 
is not a primitive solution to (\ref{eq:general3variables}).   
The case of $a=25$ corresponds to $p=5, k=1, n'=1$ and has no solution (see also \cite{ward}).  
\end{remark}

\smallskip

\begin{theorem}\label{theromDofs2}
There are no primitive solutions to Eq.~(\ref{eq:general3variables}) for 
\be
a = \frac{p \, q^{2}}{p \, q^{2}-24}~,
\label{eq:apq}
\ee
with $(p,q) \in \{ (2,3), (5,2), (7,2) \} \cup \{ (2,Q)|\,\, Q \equiv 1\!\! \mod \! 3$ prime, $2Q^2-27$ prime, and $4$ is a cubic non-residue {\rm mod}$\,  Q \, \}$.
\end{theorem}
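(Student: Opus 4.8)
The plan is to follow the same strategy as in the proof of Theorem~\ref{theoremnosolnp}. First I would apply the change of variables~(\ref{eq:xyztotuv}) to rewrite Eq.~(\ref{eq:general3variables}) with $a = p q^2/(p q^2 - 24)$, for which $24a/(a-1) = p q^2$, in the form
\be
(t + u + v)^3 = p \, q^2 \, t \, u \, v ~~,
\ee
where, exactly as in the earlier proof, for a primitive solution the integers $t,u,v$ are pairwise coprime, and $t u v \neq 0$ because $x y z \neq 0$ excludes vectorlike pairs (since $a\neq 1$), which is equivalent to $t u v\neq 0$.

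Next I would use the cube on the left-hand side to pin down the $p$-adic and $q$-adic structure of $t,u,v$. Since $p\neq q$ are primes in all the cases listed, comparing valuations shows that exactly one of $t,u,v$ is divisible by $p$, with $p$-adic valuation $\equiv 2\!\! \mod \! 3$, hence (after absorbing cube factors into the cube part) exactly $p^2$; and exactly one of $t,u,v$ is divisible by $q$, with $q$-adic valuation $\equiv 1\!\! \mod \! 3$, hence exactly $q$; moreover every variable is, away from $p$ and $q$, a perfect cube. Depending on whether the two distinguished variables coincide, $\{t,u,v\}$ equals $\{ p^2 q\, w_1^3,\ w_2^3,\ w_3^3 \}$ or $\{ p^2 w_1^3,\ q\, w_2^3,\ w_3^3 \}$ up to reordering, with $w_1 w_2 w_3$ pairwise coprime and coprime to $pq$. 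Since then $p q^2 \cdot t u v = (pq)^3 (w_1 w_2 w_3)^3$, taking the cube root gives $t+u+v = pq\, w_1 w_2 w_3$, and Eq.~(\ref{eq:general3variables}) reduces to one of the ternary cubics
\be
p^2 q\, w_1^3 + w_2^3 + w_3^3 = pq\, w_1 w_2 w_3 \qquad \text{or} \qquad p^2 w_1^3 + q\, w_2^3 + w_3^3 = pq\, w_1 w_2 w_3 ~~,
\ee
both of the shape $A w_1^3 + B w_2^3 + C w_3^3 = D\, w_1 w_2 w_3$ with $ABC = p^2 q$ and $D = pq$, whose discriminant-type invariant is $D^3 - 27 ABC = p^2 q\,(p q^2 - 27)$; for $(p,q)=(2,Q)$ this equals $4Q(2Q^2 - 27)$, which is why the primality of $2Q^2 - 27$ and the cubic-residue status of $4 = p^2$ modulo $Q$ enter the hypotheses. (Note that the second type is genuinely two-prime, so it is \cite{Dofs2} rather than \cite{Dofs1} that is needed.)

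Finally I would invoke the solvability criteria of Dofs \cite{Dofs1, Dofs2} for equations of this type, which express (non)solvability in terms of the factorization of the invariant $p^2 q\,(pq^2-27)$, congruences modulo $3$, and cubic-residue conditions on the coefficients. For the three pairs $(2,3),(5,2),(7,2)$ the reduced equations are $12 w_1^3 + w_2^3 + w_3^3 = 6 w_1 w_2 w_3$, $\,50 w_1^3+w_2^3+w_3^3 = 10 w_1 w_2 w_3$, $\,98 w_1^3 + w_2^3+w_3^3 = 14 w_1 w_2 w_3$ together with their $\{p^2,q,1\}$ companions, which I would dispatch one at a time using Dofs' classification or a short direct congruence argument; for the infinite family I would verify that the stated conditions --- $Q\equiv 1\!\! \mod \! 3$, $2Q^2-27$ prime, and $4$ a cubic non-residue modulo $Q$ --- are exactly those under which the relevant theorem of \cite{Dofs2} forces $p^2 w_1^3 + q w_2^3 + w_3^3 = pq w_1 w_2 w_3$ and its companion to have only the degenerate solution $w_1 w_2 w_3 = 0$. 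I expect the main obstacle to be bookkeeping: matching each reduced equation (and each ordering of its coefficients) to the precise hypotheses of Dofs' tables, and --- just as in the $a=4$ and $a=-1/11$ subcases of Theorem~\ref{theoremnosolnp} --- checking that whatever isolated solutions those results permit correspond to $w_1 w_2 w_3 = 0$ and hence do not lift back to primitive solutions of Eq.~(\ref{eq:general3variables}).
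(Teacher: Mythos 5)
Your setup is right and matches the paper's: the change of variables, pairwise coprimality of $t,u,v$, the valuation argument giving $v_p(tuv)\equiv 2 \bmod 3$ and $v_q(tuv)\equiv 1 \bmod 3$, and the resulting two-way split into $\{t,u,v\}=\{p^2q\,w_1^3,w_2^3,w_3^3\}$ or $\{p^2w_1^3,q\,w_2^3,w_3^3\}$. The first branch, $p^2q\,w_1^3+w_2^3+w_3^3=pq\,w_1w_2w_3$, is indeed of the form $P^3+Q^3+cR^3=dPQR$ and can be handed to Dofs.

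The gap is in the second branch. The equation $p^2w_1^3+q\,w_2^3+w_3^3=pq\,w_1w_2w_3$ has \emph{two} distinct non-unit coefficients built from \emph{different} primes, so it is not of the shape $P^3+Q^3+cR^3=dPQR$ treated in \cite{Dofs2}, nor of the single-prime shape $p^{\omega_1}t_0^3+p^{\omega_2}u_0^3+p^{\omega_3}v_0^3=d\,t_0u_0v_0$ treated in \cite{Dofs1}; your parenthetical remark that ``it is \cite{Dofs2} rather than \cite{Dofs1} that is needed'' for this branch misreads the scope of those results, and ``matching to Dofs' tables'' cannot succeed here. The paper closes this gap with an extra idea you are missing: it applies the symmetric-function substitution $T=t^2u+u^2v+v^2t-3tuv$, $U=tu^2+uv^2+vt^2-3tuv$, $V=(t^3+u^3+v^3-3tuv)/(p^{n'}q^{m'})$, which sends any solution of $(t+u+v)^3=pq^2\,tuv$ to a solution of $T^3+U^3+p^2q\,V^3=pq\,TUV$ --- the same single-coefficient form as the first branch --- so that unsolvability of that one equation (via cases $(i)$ and $(ii,c)$ of \cite{Dofs2}) kills both branches at once. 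Without this transformation (or some substitute argument for the two-coefficient equation), your proof of the second branch does not go through; everything else, including your identification of the invariant $p^2q(pq^2-27)=4Q(2Q^2-27)$ as the source of the hypotheses on $Q$, is consistent with the paper.
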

\begin{proof}
Consider the rational coefficient $a$ of the more general form
\be
a = \frac{p^{n}q^{m}}{p^{n}q^{m}-24}~,
\label{eq:atwoprimes}
\ee
where $p\ne q$, $n=3n'-k_p$ and $m=3m'-k_q$ with $k_p,k_q=1$ or $2$.  
Changing variables from $\{ x,y,z \} $ to  $\{ t,u,v \} $ as in (\ref{eq:xyztotuv}), Eq.~(\ref{eq:general3variables}) becomes
\be
 \left( \frac{ t+u+v }{ p^{n'}   q^{m'} } \right)^{\! 3} =  \frac{t \, u \, v}{ p^{ k_p } \,   q^{ k_q }  }   ~~.
 \label{eq:tuvp2}
\ee
The fact that $t,u,v$ are pairwise coprime  implies that at least one variable is a perfect cube.
Without loss of generality, we take $t = t_0^3$. For the other variables, up to a reordering, there are two cases:
\be
 u =  u_0^3  \;\; , \,\,\, v = p^{k_p} q^{k_q} v_0^3 
 \;\; \,\,\,\,   \;\;    {\rm or}    \;\;  \,\,\,  \;\; 
 u = p^{k_p} u_0^3  \;\; , \,\,\, v = q^{k_q} v_0^3  ~~~.
\ee
The first case is similar to the one encountered in the derivation of Eq.~(\ref{eq:Dofstuv}), and implies
 \be
t_0^3 + u_0^3 + p^{k_p}  q^{k_q}v_0^3 = p^{n'}q^{m'} t_0 u_0 v_0  ~~~.
\label{eq:t0u0v0}
\ee
In the second case,  defining new variables
\bear
T &=&  t^2  u + u^2 v + v^2 t - 3 \, t \, u \, v  \nonumber  \\ [2mm]
U &=&  t \, u^2 + u  \, v^2 + v \, t^2 - 3 \, t \, u \, v   \\ [2mm]
V &=& \frac{1}{p^{n'} q^{m'}} \left(t^3+u^3+v^3- 3 \, t \, u \, v \right)     \nonumber 
\eear
transforms Eq.~(\ref{eq:tuvp1}) into a cubic Diophantine equation of the same form as (\ref{eq:t0u0v0}),
\be
T^3 + U^3 + p^{k_p}q^{k_q} V^3 = p^{n'} q^{m'} T U V  ~~~.
\label{eq:DOFsPQR}
\ee  
Thus, if this equation has no solution, neither will  (\ref{eq:general3variables})  with $a$ given by (\ref{eq:atwoprimes}).
Dofs \cite{Dofs2}  has carried out an investigation of the conditions under which (\ref{eq:DOFsPQR}) is unsolvable for $k_q=1$ and $k_p\le 3$.  Restricting further to $n=1, m=2$ leads to the result of use here.
We refer the reader to \cite{Dofs2} for details but point out that $(p,q)=(2,3)$ satisfies the unsolvability conditions of case $(ii,c)$ of Dofs \cite{Dofs2}, while all other values for $(p,q)$ listed after (\ref{eq:apq}) satisfy case $(i)$ of Dofs  and so can be determined to have no solution.
\end{proof}

The first three values for $(p,q)$ targeted by Theorem  \ref{theromDofs2}  lead  to integer $a$: 
$-3, -5$, and $7$, respectively.  The remaining values form a  family of rational $a$ for which there are no solutions:
\be
a = \left( 1 - \frac{12}{Q^2} \right)^{-1} ~~~,
\ee
where $Q$ satisfies the conditions of Theorem  \ref{theromDofs2}. 
The following Proposition addresses the structure of the prime numbers $Q$ that define this family.

\smallskip

\begin{proposition}\label{Proposition2}
For any $Q \equiv 1\, $mod$\; 3$ prime, a necessary condition for $4$ to be a cubic non-residue$\!\mod Q$ is that 
\be
Q = \frac{1}{4} \left( L^2 + 27 M^2 \right)  ~~
\label{eq:QLM}
\ee
with $L$ and $M$ odd. 
\end{proposition}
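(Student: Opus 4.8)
The plan is to reduce the cubic character of $4$ to that of $2$, then invoke Gauss's classical criterion for the cubic residuacity of $2$, and finally analyze the parity of a representation $4Q=L^{2}+27M^{2}$. To begin: since $Q\equiv 1\bmod 3$, the group $(\mathbb Z/Q\mathbb Z)^{\times}$ carries a character $\chi$ of order $3$, and $\chi(4)=\chi(2)^{2}=\overline{\chi(2)}$, so $\chi(4)=1$ if and only if $\chi(2)=1$; equivalently, $4$ is a cubic non-residue $\bmod\,Q$ exactly when $2$ is. It therefore suffices to prove that if $2$ is a cubic non-residue $\bmod\,Q$ then $4Q=L^{2}+27M^{2}$ has a solution with $L$ and $M$ both odd.

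Next I would assemble two classical facts. First, every prime $Q\equiv 1\bmod 3$ is represented by the form $x^{2}+xy+7y^{2}$ (the unique reduced primitive form of discriminant $-27$, applicable since $\left(\tfrac{-27}{Q}\right)=\left(\tfrac{-3}{Q}\right)=1$); completing the square gives $4Q=(2x+y)^{2}+27y^{2}$, so $4Q=L^{2}+27M^{2}$ is always solvable in integers. Second, by a theorem of Gauss, for $Q\equiv 1\bmod 3$ prime, if $Q=a^{2}+27b^{2}$ with $a,b\in\mathbb Z$ then $2$ is a cubic residue $\bmod\,Q$ (this is in fact an equivalence, but only this implication is needed). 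Both facts I would cite from a standard reference on cubic reciprocity or on primes of the form $x^{2}+ny^{2}$, rather than reprove them.

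It then remains to control the parity. Reducing $4Q=L^{2}+27M^{2}$ modulo $4$ and using $27\equiv-1\pmod 4$ gives $L^{2}\equiv M^{2}\pmod 4$, hence $L\equiv M\pmod 2$. If $L$ and $M$ were both even, then $Q=(L/2)^{2}+27(M/2)^{2}$ would be of the form $a^{2}+27b^{2}$, whence Gauss's criterion would make $2$ — and therefore $4$ — a cubic residue $\bmod\,Q$, contradicting the hypothesis. Hence $L$ and $M$ are both odd, that is, $Q=\tfrac{1}{4}\left(L^{2}+27M^{2}\right)$ with $L,M$ odd, as claimed.

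I do not expect a genuine obstacle in the deduction itself; the only real work is to quote the two classical inputs in exactly the form used — the representability of every prime $\equiv 1\bmod 3$ by $x^{2}+xy+7y^{2}$, and Gauss's cubic residuacity criterion for $2$ — after which everything is elementary. One should also dispatch the harmless edge cases ($L=0$ or $M=0$ would force $Q$ to be a square or $27$ times a square, impossible for a prime $Q\ge 7$), but these do not affect the argument.
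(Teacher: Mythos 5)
Your proof is correct and follows essentially the same route as the paper: reduce the cubic residuacity of $4$ to that of $2$, invoke the classical representation $4Q = L^2 + 27M^2$, and apply Gauss's criterion that $2$ is a cubic residue mod $Q$ exactly when $Q = a^2+27b^2$ (equivalently, when $L$ and $M$ are both even). Your explicit mod-$4$ parity argument showing $L\equiv M\pmod 2$ is a detail the paper leaves implicit, but the substance is identical.
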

\begin{proof}
\ By definition,  $4$ is a cubic non-residue  mod$\, Q$ if there exists no integer $s$ with $s^3 \equiv 4\,$mod$\, Q$. 
A necessary condition is that  $2$ is a cubic non-residue mod$\,Q$. 
It can be shown \cite{IrelandRosen} that any $Q \equiv 1\, $mod 3 prime can be written
as (\ref{eq:QLM}) with $L,M >0$ uniquely determined integers. 
Furthermore, $2$ is a cubic residue mod$\, Q$ iff $L$ and $M$ are even  \cite{Lemmermeyer}. 
Thus, $2$ is a  cubic non-residue mod$\, Q$ iff both $L$ and $M$ are odd.
\end{proof}

The first four values of $Q$ that satisfy the conditions of Theorem \ref{theromDofs2} are 
$Q = 7, 13, 37, 67$, and the corresponding values of $(L,M)$ are (1,1), (5,1), (11,1), (5,3). 
The ensuing values of $a$, for which there are no primitive solutions to Eq.~(\ref{eq:general3variables}),
are  49/37, 169/157, 1369/1357, 4489/4477.

\smallskip\smallskip

\subsection{Testing for  primitive solutions with elliptic curves}
\label{sec:elliptic}

Using the transformation of   (\ref{eq:anomtoelliptictrans}),
the ``cube of sum proportional to the sum of cubes'' equation can be converted to an elliptic curve ($E$), 
given in (\ref{eq:ellipticcurve}), allowing one to bring to bear all of the powerful machinery associated with their study \cite{Silverman2009,Cremona1997,Smart1998}.  The Mordell-Weil theorem \cite{MordellWeil}  states that the set of rational points on an elliptic curve, $E(\mathbb{Q})$, form a finitely generated Abelian group and as such can be separated into sets of points of finite order, the torsion group(s), and the set of points of infinite order.  The number of infinite order points is the rank of the Mordell-Weil group of $E(\mathbb{Q})$. 
Thus, if the rank is $0$, then the number of rational points on $E$ is finite, and the points are generated by the members of the torsion group. 

The rank and the generators of the torsion group can be efficiently calculated, for any given value of $a$, by computer algebra systems such as \texttt{PARI/GP} \cite{PARI2}.  
Some of the algorithms rely on the Birch and Swinnerton-Dyer conjecture \cite{BSD}, which relates the rank of $E(\mathbb{Q})$ to the order of the zero of a certain $L$-function, $L(E,s)$, at $s=1$, the so-called analytic rank. The conjecture has been proved  \cite{Kolyvagin} for curves whose rank is $0$ or $1$,
and thus this procedure is adequate for our purpose of determining the number of solutions to Eq.~(\ref{eq:general3variables}) for given $a$.

The elliptic curve (\ref{eq:ellipticcurve}) is in the Weierstrass reduced form,
\be
Y^2  = X^3 + A X + B  ~~, 
\label{eq:Weierstrass-reduced}
\ee 
with $A = - 27 a^3$, $B = - (27/4) a^4  \left( a^2 - 6 a - 3  \right) $. The $j$-invariant 
of this curve is 
\be
j_a = -  \frac{(24A)^3}{\Delta} = - \frac{2^{12} \, 3^3 \,   a }{(a-1)^3\, (a-9)}  ~~,
 \label{eq:jinvariant}
\ee
where we used the discriminant $\Delta = - 16  \left(  4 A^3 + 27 B^2 \right) $ computed in (\ref{eq:disc}).
Birational transformations,  $X \to r_b^2 X$, $Y\to r_b^3Y$ with  $r_b \in \mathbb{Q} $, 
and more complicated isogenies do not change the $j$-invariant. 

For the curve (\ref{eq:Weierstrass-reduced}) with $\Delta \neq 0$ and integer $A,B$,  
the Nagell-Lutz theorem \cite{NagellLutz}  ensures that any rational point that belongs to the torsion group has integer coordinates. 
Thus, when the  $A,B$ coefficients are noninteger, it is useful to perform a birational transformation that gives integers coefficients.
For any $a = a_L/a_R$, with  $a_L, a_R \in \mathbb{Z} $, the elliptic curve (\ref{eq:ellipticcurve})
can be transformed into an elliptic curve  with integer coefficients:
\be
Y_0^2  = X_0^3 - 432 \, a_L^3 a_R  X_0 - 432 \, a_L^4   \left( a_L^2 - 6 a_L a_R  - 3 a_R^2 \rule{0mm}{4mm}  \right)   ~~,
\label{eq:ellipticcurveWeier}
\ee 
where the new variables are $X_0 = (2 a_R)^2 X$, $Y_0 = (2 a_R)^3 \, Y$.
The corresponding solution to  (\ref{eq:general3variables}), up to an overall normalization similar to that shown in 
(\ref{eq:anomtoelliptictrans}), is given by 
\bear
x & \!=\! & 6 a_L \left( 12 a_L a_R - X_0 \right)    ~~~,
\nonumber \\ [-2mm]
\label{eq:anomtoelliptictrans0-old}
 \\ [-2mm]
y , z & \!=\! &  36 \,  a_L^2  \left( a_L - a_R \right)  \pm Y_0      ~~~.
\nonumber
\eear

Note that    
the points $\pm P_V$ on the elliptic curve (\ref{eq:ellipticcurveWeier})  of integer 
coordinates  $12a_L^2  \left( 1  ,   \pm 3 (a_L  -  a_R)   \rule{0mm}{3.5mm}   \right)  $ 
correspond to the vectorlike solutions $x+y = z = 0$ and $x+z = y = 0$.
Furthermore, $d^2 Y_0/d X_0^2$ vanishes at  $\pm P_V$, so these two inflection points and the point at infinity (labelled $\cal O_\infty$) form a $\mathbb{Z}_3$ subgroup of the torsion group. 
This implies that the torsion group, which in general can be any of the 15 discrete groups listed in \cite{Mazur},  is constrained in our case to be one of only 5 groups: 
$\mathbb{Z}_{3n}$ with $1 \leq n \leq 4$, and 
  $\mathbb{Z}_2 \times \mathbb{Z}_6$  (we use the short-hand notation  $\mathbb{Z}_{m} = \mathbb{Z}/m\mathbb{Z}$).
In Section \ref{sec:integera} we will show that this list is significantly shorter 
when $a\in \mathbb{Z}$. 
Examples of elliptic curves of various torsion groups and ranks 
can be found at \cite{LMFDB}. These can be transformed into the Weierstrass reduced form and
compared with (\ref{eq:ellipticcurveWeier}) up to a birational transformation,
checking if they are consistent with integer values for $a_L$ and $a_R$.
We next prove some results about  the number and properties of primitive solutions to (\ref{eq:general3variables}).

\begin{proposition}\label{Proposition6sol}
For any set of non-vectorlike integer solutions to Eq.~(\ref{eq:general3variables}) with $a \in \mathbb{Q}$,
$a \neq 0,1,9$,
obtained by a reordering or an overall rescaling of $\{ x,y,z\}$, there exist 6 rational points on 
the elliptic curve (\ref{eq:ellipticcurve}) when $x,y,z$  do not include two equal variables, or 3 rational points when two variables are equal.  
\end{proposition}
\begin{proof}
If  $\{ x,y,z\}$  is a non-vectorlike solution to Eq.~(\ref{eq:general3variables}) with $a\neq 0,1$, then from  (\ref{eq:anomtoelliptictrans}) follows that 
$(X_1 , Y_1)$ are two  rational points on the elliptic curve  (\ref{eq:ellipticcurve}) with 
\be
(X_1 , \pm Y_1) = \left(   3 a -\frac{x}{3 a  {\cal R}_1 }  \;\;   , \;   \pm \frac{y-z}{2   {\cal R}_1  }  \right)
 \;\;   , \, \;  \;
{\cal R}_1  = \frac{y + z}{ 9 a^2 (a - 1) }   ~~.
\label{eq:X1Y1R1}
\ee
Given that $\{ y,z, x\}$ and  $\{ z, x, y \}$ are also solutions, there are two more pairs of rational points, $(X_i , \pm Y_i)$ with $i = 2,3$,
whose coordinates are given by the appropriate permutations of $x,y,z$ in (\ref{eq:X1Y1R1}).
Thus, generically, for all the solutions obtained by a reordering or an overall rescaling of $\{ x,y,z\}$, there are
6 rational points whose coordinates are given by (\ref{eq:X1Y1R1}) and its permutations.     
It is straightforward to check that the 6 rational points are distinct when no two of the $x,y,z$ variables are equal.

If two of the $x,y,z$ variables are equal, then two of the pairs of rational points are identical,
and the third pair has $Y_i = 0$, so only 3 rational points are distinct.
The case with $x=y=z$ is not relevant here, as it occurs only for the singular curve with $a=9$.
\end{proof}

\begin{theorem}\label{TheoremZ9}
The torsion group of the elliptic curve (\ref{eq:ellipticcurve}) is $\mathbb{Z}_9$  iff 
the rational coefficient $a$ in Eq.~(\ref{eq:general3variables}) satisfies
\be
a^{-1} = 1 -  3 \left(\frac{ 2 f ( f - 1) }{f^3 - 3 f^2 + 1} \right)^{\! 3}   ~~,   
\label{eq:Z9a}
\ee
where $f \in \mathbb{Q}$, $f \neq 0, 1$.  
\end{theorem}
\begin{proof}
Following Kubert's parametrization \cite{Kubert}, the elliptic curve in rational variables $X_K, Y_K$
can be written as
\be
Y_K^2 + (1 - c_K)Y_K X_K -b_K Y_K = X_K^2 \left( X_K - b_K \right)    ~~,
\label{KubertCurve}
\ee
where $b_K, c_K \in \mathbb{Q}$, $b_K \neq 0$. 
The point $P_0 = (0,0)$ is of maximal finite order. Following the method of Reichert \cite{Reichert},
we repeatedly use the group addition on $P_0$: 
$2P_0 = (b_K \, , \,  b_K c_K )$,  $3 P_0 =  (c_K \, , \,  b_K - c_K )$. For $c_K \neq 0$ and  $b_K \neq c_K$ we also obtain
\bear
&& 4  P_0 =  \frac{b_K}{c_K^2}   \left(  b_K - c_K \; , \;   b_K (c_K + 1) -  \frac{b_K^2}{c_K}  \right)  ~~,
\nonumber \\ [-2mm]
\\ [-2mm]
&& 5 P_0 = \frac{b_K \, c_K^2}{ (b_K - c_K)^2 }   \left(  c_K + 1 -  \frac{b_K}{c_K}  \; \, , \;  b_K -  \frac{ c_K^3}{b_K - c_K}  \right)  ~~.
\nonumber 
\eear
The above signs in $Y_K(4P_0)$ differ from those in \cite{Reichert}, and agree with \cite{Atkin}.

The torsion group of the elliptic curve is $\mathbb{Z}_9$  iff $X_K(5P_0) = X_K(4P_0)$, which requires
\be
c_K^2 + c_K - b_K = \left( \frac{b_K}{c_K} - 1  \right)^{\! 3}  ~~~.
\ee 
Imposing that the left-hand term is a cube, chosen for convenience to be $(2 f - 1)^3$ with $f \in  \mathbb{Q}$, gives
\be
b_K = c_K \left( f^2 - f + 1 \right)    \;\;\;    ,    \;\;\;     c_K =  f^2 \left( f - 1 \right)    ~~.
\label{eq:f-restriction}
\ee
This agrees with \cite{Kubert}, and differs slightly from  \cite{Battista}. The conditions  $b_K c_K \neq 0$ and  $b_K \neq c_K$
are satisfied for any $f \neq 0,1$.
 
The elliptic curve (\ref{KubertCurve})   can be 
transformed  to the reduced Weierstrass form by putting
\bear
&& X_K=  r_b^2 X + \frac{b_K - c_\star^2}{3}   ~~,
\nonumber \\ [-2mm]
\label{Kubert-transf}
 \\ [-2mm]
&& Y_K = r_b^3 \, Y - c_\star r_b^2 X  -  \frac{c_\star}{3} \left( b_K - c^2_\star \right) + \frac{b_K}{2}  ~~,
\nonumber 
\eear
where $c_\star= (1 - c_K)/2$, and $r_b$ is the parameter of a birational transformation.
The Weierstrass coefficients $A_K$ and $B_K$ are polynomials in $b_K,c_K$, divided by $r_b^4$ and 
$r_b^6$, respectively. 
The requirement that the discriminant of (\ref{KubertCurve}) is nonzero gives the condition 
\be
D_K = 2 b_K^2 
- b_K \left( 4 c_\star^2 - 9 c_\star + 27/8 \right) + 2 c_\star^4 -  c_\star^3  \neq 0 ~~.
\label{eq:DK}
\ee
The $j$-invariant of (\ref{KubertCurve}) and also of the transformed curve is 
\be
j_K = \frac{ 2^9}{D_K}  \left(b_K -2 c_\star^2 +3 c_\star  + c_\star^4/ b_K \right)^3  ~~.
\label{eq:jK}
\ee

In the case of the $\mathbb{Z}_9$  torsion group, the restriction (\ref{eq:f-restriction}) 
ensures that (\ref{eq:DK}) is satisfied, and it implies that 
$A_K$, $B_K$ become polynomials of order 12, respectively 18, in $f$, labelled by $A_f$ and $B_f$.
A necessary condition for our elliptic curve  (\ref{eq:ellipticcurveWeier}) to have 
the $\mathbb{Z}_9$ torsion group is that its points of order 3,
given by $(X,Y) = 3a^2( 1, \pm 3 (a -1)/2  )$,   
are identified with the  points $3 P_0$ or $-3 P_0$ 
on $Y^2 = X^3 + A_f X + B_f$, which have coordinates
\be
\hspace*{3mm}
X(3 P_0)    = \frac{1}{12\, r_b^2}   \left( f^3 - 3 f^2 + 1 \right)^2    \,\;\; , \,\;\;    Y(\pm3 P_0) = \mp \frac{  f^3}{2 \, r_b^3} \left( f - 1 \right)^3   ~.
\ee
This identification implies that $a$ must satisfy (\ref{eq:Z9a}), or the analogous relation with 
a sign flip in front of the parenthesis. 
The $j$-invariant obtained by inserting the restriction (\ref{eq:f-restriction}) in (\ref{eq:jK}) can then be compared 
with (\ref{eq:jinvariant}), and the $j_K = j_a$ identity is obtained only for the sign assignment in (\ref{eq:Z9a}).
Furthermore, no rational value for $f$ allows the equality of the $j$-invariants for the other sign assignment.
Thus, (\ref{eq:Z9a}) is also a sufficient condition for the $\mathbb{Z}_9$ torsion group. 
\end{proof}


\begin{proposition}\label{PropositionTorsiona}
Any elliptic curve with torsion group $\mathbb{Z}_{6}$ and nonzero $j$-invariant, 
or with torsion group $\mathbb{Z}_{9}$, $\mathbb{Z}_{12}$, 
or  $\mathbb{Z}_2 \times \mathbb{Z}_6$,
is equivalent to Eq.~(\ref{eq:general3variables}) for a particular  $a \in  \mathbb{Q}$.
\end{proposition}
\begin{proof} \ 
Equating the $j$-invariants $j_K$, given in (\ref{eq:jK}), and $j_a$, given in (\ref{eq:jinvariant}),  leads to a quartic equation for $a$ in terms of $b_K$, $c_K$.  When the torsion group is $\mathbb{Z}_{6}$, $\mathbb{Z}_{9}$, $\mathbb{Z}_{12}$ or  $\mathbb{Z}_2 \times \mathbb{Z}_6$ the parametrization for $b_K$, $c_K$ given by Kubert \cite{Kubert} leads to a rational root of the quartic.  The general parametrization for an elliptic curve with torsion group $\mathbb{Z}_{6}$ or $\mathbb{Z}_2 \times \mathbb{Z}_6$ is given by (\ref{KubertCurve}) with $b_K=c_K+c_K^2$ for $c_K\ne 0,-1,-1/9$, and the rational solution for $a$ satisfies
\be
a^{-1}= 1-24 \, \frac{c_K^2 (c_K+1) }{(3c_K+1)^3}~~~.
\label{eq:jajKZ6}
\ee
The $j_K = 0$ curve with $\mathbb{Z}_{6}$ torsion has $c_K = -1/3$, and cannot be obtained from (\ref{eq:ellipticcurve})
for any $a$. Any Kubert curve (\ref{KubertCurve}) with  $\mathbb{Z}_{6}$ torsion and $j_K \neq 0$  can be transformed into our elliptic curve (\ref{eq:ellipticcurve})  using  (\ref{Kubert-transf}) with 
\be
r_b = \frac{3 c_K \left(3 + c_K + c_K^2  \right)   + 1}{6 \, \left(3 c_K + 1\right)^2}    ~~. 
\label{eq:rbKZ6}
\ee
For torsion group $\mathbb{Z}_2\times\mathbb{Z}_6$, the same forms for $a$ and $r_b$ as in (\ref{eq:jajKZ6}) and  (\ref{eq:rbKZ6})  are valid with the restriction 
$c_K=2(5-\alpha)/(\alpha^2-9)$, where $\alpha \in \mathbb{Q}$ and $\alpha \ne 1, \pm 3, 5, 9$.

For torsion group $\mathbb{Z}_{12}$ the general parameterization \cite{Kubert} has $b_K$, $c_K$ as rational fractions of a parameter $\tau \in \mathbb{Q}$, $\tau\ne 0,1,1/2$, and the rational solution of the quartic satisfies
\be
a^{-1}=1-24 \, \frac{ \sigma^4  (4\sigma+1)(2\sigma+1) }{ \left( 6\sigma(\sigma+1)+1 \rule{0mm}{3.7mm}  \right)^3}   ~~~, 
\ee
with $\sigma =\tau(\tau-1)$.  In this case, the $r_b$ parameter in transformation (\ref{Kubert-transf}) 
is given by $\tau^3$ times a rational fraction in $\sigma$.
The solution for torsion group $\mathbb{Z}_{9}$ is given in (\ref{eq:Z9a}).
\end{proof}

\begin{proposition}\label{PropositionNa}
The number $n_a$ of primitive solutions to Eq.~(\ref{eq:general3variables}), up to a reordering of the variables or an overall 
sign change,  satisfies  $0 \leq n_a \leq 3$ or is infinite. 
\end{proposition}
\begin{proof}
For each rational point $(X, Y)$ on the elliptic curve (\ref{eq:ellipticcurve}) corresponding  to an $\{ x,y,z \}$ solution to Eq.~(\ref{eq:general3variables}), Proposition \ref{Proposition6sol} ensures that there are only five or two  
other rational points that correspond to the same solution up to a reordering of $ x,y,z $. 
If the rank of the elliptic curve is $r(E) \ge 1$, then the number of rational points is infinite, and thus 
the number of $\{ x,y,z \}$ solutions with gcd$( x,y,z ) =1$ is infinite. 
Up to a reordering or a sign change, there is a single vectorlike solution with gcd$( x,y,z ) =1$, and at most one non-vectorlike solution (see Corollary \ref{Cor2}) with $xyz =0$ for any given $a \neq 1$, 
the number of primitive solutions is $n_a = \infty$ when $r \ge 1$. Examples of rank 1 include $a=-1, 3, 6$.
If the discriminant $\Delta$ given in (\ref{eq:disc}) is 0, then $a = 1$ and $n_a = 0$, or $a= 0,9$ which gives $n_a = \infty$ (the case $a=9$ is solved in Theorem \ref{Proposition-ells}).

It remains to consider the case where  $r(E) = 0$.
In that case, the number of rational points is finite, and is given by the number of elements of the torsion group that are different from $\cal O_\infty$.  
As there are exactly two rational points corresponding to vectorlike solutions, namely $(X,Y) = 3a^2( 1, \pm 3 a (a -1)/2  )$,  
the number of rational points that may correspond to primitive solutions
is 0 for $\mathbb{Z}_3$, 3 for $\mathbb{Z}_6$, 6 for $\mathbb{Z}_9$, and 9 for 
$\mathbb{Z}_{12}$ or $\mathbb{Z}_2 \times \mathbb{Z}_6$. 
Thus, if the torsion group is $\mathbb{Z}_3$ and $r(E) = 0$, then $n_a=0$.  
As an example, for $a=16$ the elliptic curve after  the 
birational transformation $X = 4 \tilde X$, $ Y = 8 \,  \tilde Y$ becomes
$\tilde Y^2 = \tilde X^3 - 432 \tilde X - 16956 $. Using  \texttt{PARI/GP}, we find 
$r(E) = 0$ and the torsion group $\mathbb{Z}_3$ generated by $( \tilde X, \tilde Y ) = ( 48, \pm 270 )$,
corresponding to the vectorlike solution $\{1,-1,0\}$. 

Proposition \ref{Proposition6sol} implies that for $\mathbb{Z}_6$ there is a single primitive solution when $a\neq 4$,
and that solution has two equal variables. An example is $a=-1/11$, and the primitive solution is $\{3,-2,-2\}$. 
For $a = 4$, the torsion group is $\mathbb{Z}_6$ but $n_a$ = 0 because 
$xyz=0$, as shown in the proof to Theorem \ref{theoremnosolnp}. 
From Proposition \ref{Proposition6sol} follows that $n_a = 1$  also occurs for any $a$ (as determined by Theorem \ref{TheoremZ9})
that gives the $\mathbb{Z}_9$ torsion group, 
but there are no two equal variables in this case.
For example, $a=9/73$ gives the primitive solution $\{7,-5, 1\}$.
The $\mathbb{Z}_{12}$ group gives $n_a=2$, and a single primitive solution has two equal variables.
For $a = -1331/8389 $, the primitive solutions are $\{29 , - 20 , - 20 \}$, $\{ 43, -41 , -13 \}$.
The $\mathbb{Z}_2 \times \mathbb{Z}_6$ group gives $n_a=3$, and each solution has two equal variables. 
This is possible only for $\Delta > 0$, {\it i.e.},  $1 < a < 9$. 
For example, $a = 343/127$ gives the primitive solutions $\{ 5,  1, 1\}$,  $\{ 4,  4, -1 \}$, $\{11 , -9 , -9 \}$.
Thus, there are values of $a \in \mathbb{Q}$ for each of the $n_a = 0,1,2,3,\infty$ cases.
The existence of the vectorlike solutions and the Mazur theorem \cite{Mazur} 
ensure that (\ref{eq:ellipticcurve}) cannot have other torsion groups. 
\end{proof}

\section{Integer $a$}
 \label{sec:integera}
\setcounter{equation}{0}

Let us restrict the analysis to $a \in \mathbb{Z}$.  We first identify an infinite family of $a$ values that allow primitive solutions.
We then find that only for $a=9$ are there primitive solutions with two equal variables.  We also present the  general solution for $a=9$, which involves two parameters.  This is followed by a proof that 
the number of primitive solutions for any integer $a$ is either infinite or 0.

\subsection{Infinite families of solutions for integer $a$}
\label{sec:integera-inf}

\begin{proposition}\label{PropositionFib}

For each equation  (\ref{eq:general3variables}) with 
\be
a = (-1)^n  F_n^2  ~~,
\label{eq:Fibonacci}
\ee
where $F_n$ is the Fibonacci number of integer $n\ge 1$, $n\neq 2$, there is at least one primitive solution with $x > -y > -z > 0$.
\end{proposition}
\begin{proof}
\  Using (\ref{eq:arsfamily}) for $s = -4/3$ and $r = p/(9q)$, where $p, q \in \mathbb{Z}$ and  $q\neq 0$, gives the infinite family of 
$a$ values
\be
a = \frac{4 q^2}{ p^2 - 5 q^2}    ~~.
\label{eq:afrac1}
\ee
Proposition \ref{Proposition-a} ensures that Eq.~(\ref{eq:general3variables}) has at least one primitive solution
for each of the $a$ values in (\ref{eq:afrac1}) iff $\pm p/q \neq 3,9$.    
From (\ref{eq:arsfamilyFermat})  follows that a primitive solution is given by
\be
\{ x,y,z \}  = 
\frac { \{ \,   12 q   \;  ,  \, - p \!-\! 9q   \;  ,  \,   p \!-\! 9q   \,  \}   } { 2^{\delta}   \;  {\rm gcd}( p, 3 )    \;  {\rm gcd}( p, q ) } ~~~,
\label{eq:solafrac1}
\ee
where $\delta = 1$ if $pq$ is odd, and $\delta = 0$ otherwise.
An infinite subset of the family of equations (\ref{eq:afrac1}) is obtained for $p = L_n$ and $q= F_n$, where $L_n$ and $F_n$ are the Lucas and Fibonacci numbers with 
$n \ge 1$ ($n = 2$ is not allowed because it gives $p/q = 3$).
The identity $L_n^2 = 5 F_n^2 + 4 (-1)^n$ implies that $a$ is, up to a sign, the square of the  $n$th  Fibonacci number, as in  (\ref{eq:Fibonacci}).
To determine the solution arising from (\ref{eq:solafrac1}) for each $n$,  we use the identities  $L_n = F_n + 2 F_{n-1}$
and  gcd$(F_m,F_n) = F_{\mathrm{gcd}(m,n)}$ (see page 190 of \cite{Hardy}), which imply   $2^{\delta}  {\rm gcd}( L_n,F_n ) = 2$. Furthermore,  $3| L_n$ iff $n\equiv 2\!\! \mod \! 4$, so the primitive 
solution becomes
\be
\{ x,y,z \}  = 
\frac {1} { 3^\theta } \{  \, 6 F_n   \; ,  \, - 5  F_n \!-\!  F_{n-1}   \;  , \,  - 4 F_n  \!+\!  F_{n-1}  \,  \}   ~~,
\label{eq:solafrac1F}
\ee
where $\theta = 1$ for $n\equiv 2\!\! \mod \! 4$, and $\theta = 0$ otherwise. 
This solution to  (\ref{eq:general3variables})  satisfies $x > -y > -z > 0$ for any $n \ge 1$.
\end{proof}

In particular, $n = 1$ gives  $a = -1$ and the solution $\{ 6, -5,  -4 \}$,
while $n = 3$ gives  $a = -4$ and the solution $\{ 12, -11,  -7 \}$.  The case of $n$ even
gives an infinite family of equations with $a$ as a perfect square, discussed further 
in Section \ref{sec:asquare}. Other families of solutions defined by recurrence relations 
can be obtained from the results of \cite{Brueggeman} for an equation related to (\ref{eq:general3tuv}). 

\begin{remark}
For $r = - 6 q^3$, $s = - 6 q^2$, with $q \in \mathbb{Z}_{\ne 0} \,$, Proposition \ref{Proposition-a}
yields the infinite family of negative integers $a = -4 ( 3q^2 - 1)^3$ with the primitive solutions 
$\{ - 6 q^2, \, 1 - 6 q^3, \, 1 + 6 q^3\} $.
For $a = p^3$, Theorem \ref{Theorem1} ensures at least one primitive solution for each $p \in \mathbb{Z}$, $p\neq 1, -2$.
\end{remark}

\subsection{Solutions with $y=z$} 
\label{sec:aan2}

We now show that the Diophantine equation   (\ref{eq:general3variables})  has solutions with two equal variables, when $a$ is integer,
 only for $a =9$ if  $xyz\neq0$.  In the case where one variable is zero,  Corollary \ref{Cor1}   implies that 
the other two variables are equal and nonzero only for $a=4$.

Before proving the main result of this section (Theorem \ref{theorema9}), it is useful to 
solve the following Thue equation with integer variables:
\be  
x^3  + 2 y^3  = 2 ~~.
 \label{eq:xy2}
\ee
This belongs to a family of equations 
shown by Delaunay and Nagell   (see \cite{Haggmark1950, Nagell1969})
to have at most one solution with $xy\neq 0$. We 
prove a stronger statement, using classical methods (the same result can be obtained using a computer algebra system, such as \texttt{PARI/GP}).

\begin{lemma}\label{lemmaxeqy}
The Diophantine equation  (\ref{eq:xy2}) has no solution with $x \neq 0$. 
\end{lemma}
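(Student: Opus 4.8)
The plan is to show that $x^3 + 2y^3 = 2$ forces $x = 0$ by reducing to a standard finite-determination result for a cubic Thue equation and then checking the finitely many candidates. Since $x^3 = 2(1 - y^3)$, $x$ is even; write $x = 2w$, so the equation becomes $8w^3 + 2y^3 = 2$, i.e.\ $4w^3 + y^3 = 1$. This is an equation of the Delaunay--Nagell type $y^3 + D w^3 = 1$ with $D = 4$, which (as noted in the excerpt, citing \cite{Haggmark1950, Ljunggren1953, Nagell1969}) has at most one solution with $wy \neq 0$. So the only possible integer points have $w = 0$ or $y = 0$.

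First I would dispose of the two degenerate branches. If $w = 0$ then $y^3 = 1$, giving $y = 1$, $x = 0$ — this is the trivial solution we want to be the only one. If $y = 0$ then $4w^3 = 1$, which has no integer solution. Hence the only remaining possibility is a genuine solution with $wy \neq 0$; the classical theory guarantees there is at most one such, so it suffices to exhibit it or to rule it out. The natural move is to search small values: testing $w = \pm 1, \pm 2, \dots$ in $y^3 = 1 - 4w^3$ shows $w = 1$ gives $y^3 = -3$, $w = -1$ gives $y^3 = 5$, $w = 2$ gives $y^3 = -31$, etc., none of which is a cube, and an elementary size/congruence argument (e.g.\ reducing $y^3 = 1 - 4w^3$ modulo a well-chosen small modulus, or bounding via the factorization over $\mathbb{Z}[\sqrt[3]{4}]$) shows no cube arises. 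Therefore there is no solution with $wy \neq 0$, and translating back, $x^3 + 2y^3 = 2$ has no solution with $x \neq 0$.

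The main obstacle is making the ``at most one solution'' input genuinely rigorous and then excluding that one potential solution cleanly, rather than just quoting Delaunay--Nagell as a black box. The cleanest self-contained route is probably to work in the ring $\mathbb{Z}[\theta]$ with $\theta = \sqrt[3]{2}$ (or $\sqrt[3]{4}$), which has class number $1$ and fundamental unit $\theta - 1$ (equivalently $1 + \theta + \theta^2$ up to inverse/sign), factor $2 - x^3 = 2y^3$ or the companion relation $1 - y^3 = 4w^3$ as a product of principal ideals, deduce that the relevant factor is a unit times a cube, and then expand $(\pm(\theta-1)^k)$ in the integral basis $\{1, \theta, \theta^2\}$ and force the $\theta$- and $\theta^2$-coefficients to vanish. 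That coefficient-vanishing condition is a pair of simultaneous recurrences in $k$ that can be solved by a short $2$-adic or mod-$p$ argument, pinning $k$ to a single residue and hence to the trivial solution. I would carry out that unit-and-cube computation explicitly only as far as needed to see that $k \equiv 0$ is forced, and then conclude $x = 0$.
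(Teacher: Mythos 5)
There is a genuine gap, and it sits exactly where you suspect: the step that rules out the one possible nontrivial solution is never carried out. The Delaunay--Nagell theorem gives \emph{at most one} solution of $y^3+4w^3=1$ with $wy\neq 0$; it gives no height bound, so "testing $w=\pm 1,\pm 2,\dots$" can never conclude. Your fallback of "reducing modulo a well-chosen small modulus" cannot work either: the equation has the solution $(y,w)=(1,0)$, so it is solvable in every $\mathbb{Z}/m\mathbb{Z}$ and no congruence obstruction exists. (Concretely, mod $9$ one only learns $3\mid w$, mod $7$ only $7\mid w$, etc.) That leaves your third route --- factoring $1-y^3=4w^3$ in $\mathbb{Z}[\sqrt[3]{2}]$, writing the relevant factor as $\pm(\sqrt[3]{2}-1)^k$ times a cube, and pinning down $k$ by a $2$-adic or recurrence argument --- which is the correct standard method but is precisely the entire content of the lemma, and you defer it ("only as far as needed"). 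There is also a technical wrinkle you gloss over: the binomial units $y+w\sqrt[3]{4}$ live in the non-maximal order $\mathbb{Z}[\sqrt[3]{4}]$ (index $2$ in $\mathbb{Z}[\sqrt[3]{2}]$), whose fundamental unit is $(\sqrt[3]{2}-1)^2=1-2\sqrt[3]{2}+\sqrt[3]{4}$ rather than $\sqrt[3]{2}-1$; the Delaunay--Nagell statement must be applied to the right order. As written, the first two sentences also commit a non sequitur ("at most one solution with $wy\neq 0$, so the only possible points have $w=0$ or $y=0$"), though you immediately walk that back.

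For contrast, the paper's proof is entirely elementary and avoids unit computations: after killing the cases $y\equiv 0,2 \pmod 3$ by reducing $x^3+2y^3=2$ modulo $9$ (cubes mod $9$ lie in $\{0,\pm 1\}$), it writes $x^3=2(1-y)(1+y+y^2)$ for $y\equiv 1\pmod 3$, shows the two factors have gcd exactly $3$ with $1+y+y^2\equiv 3\pmod 9$, and deduces the splitting $2(1-y)=9f^3$, $1+y+y^2=3g^3$. The latter is equivalent to $(y+2)^3=(y-1)^3+(3g)^3$, which Fermat's Last Theorem for exponent $3$ forbids except in degenerate cases that are easily excluded. If you want to keep your algebraic-number-theoretic route, you must actually execute the Skolem-style analysis of the coefficients of $(\sqrt[3]{2}-1)^{2k}$; otherwise the elementary factorization-plus-FLT argument is both shorter and self-contained.
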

\begin{proof}
If $3|y$, then $x \equiv 2\,$mod$\,3$, implying $x^3 \equiv -1\,$mod$\,9$, in contradiction to $9|y^3$.
If $y \equiv 2\,$mod$\,3$, then $x^3 \equiv 4\,$mod$\,9$, which is impossible for a cube.
For  $y \equiv 1\,$mod$\,3$, Eq.~(\ref{eq:xy2})  uniquely splits into two parts proportional to perfect cubes: \\ [-6mm]
\bear
&& 2 (1 - y) = 9 f^3   \;\;\; , 
\label{eq:3yg1}
\\ [2mm]
&&    \left( y +2 \right)^3  -   \left( y - 1 \right)^3 =  (3g)^3     ~~,
\label{eq:3yg2}
\eear
where  $f,g \in  \mathbb{Z}$, $f,g \neq 0$, \GCD$(f,g) = 1$, and $x = 3fg$. Eq.~(\ref{eq:3yg2}) is in contradiction to 
FLT unless $y=1$ or $y=-2$. These values are not allowed by $x\neq 0$ and Eq.~(\ref{eq:3yg1}).
\end{proof}

\begin{theorem}\label{theorema9}
If there exists a primitive solution $\{x,y,z\}$ to Eq.~(\ref{eq:general3variables}) with $a\in \mathbb{Z}$, $a \neq 9$ and $a\neq 0$, then no two variables can be equal.
For $a = 9$, there are only two primitive solutions with $y = z > 0$: $x = y= 1$ and $x= -5$, $y = 4$. 
\end{theorem}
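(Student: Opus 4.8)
The plan is to set $y=z$ in Eq.~(\ref{eq:general3variables}) and reduce it to a small collection of one-variable Diophantine problems, most of which are ruled out directly by parity/modular arithmetic and by Lemma~\ref{lemmaxeqy}, with the surviving case $a=9$ solved explicitly. First I would substitute $y=z$ into $a(x^3+y^3+z^3)=(x+y+z)^3$, obtaining $a(x^3+2y^3)=(x+2y)^3$. By condition~2 of Definition~\ref{def:primitive} one may assume $\GCD(x,y)=1$ (and in fact $\GCD(x,y,z)=\GCD(x,y)$ here). Since $a$ is an integer, $(x+2y)^3$ must be divisible by $a$; more to the point, I would argue that the only common factor that $x^3+2y^3$ and $(x+2y)^3$ can share is a power of $3$: indeed $x+2y \equiv x-y \pmod 3$, and a prime $p\neq 3$ dividing $x+2y$ would, together with $p\mid a(x^3+2y^3)$, force $p\mid x$ and $p\mid y$ after expanding $(x+2y)^3 = x^3+2y^3 + 3\cdot(\text{stuff})$ — contradicting coprimality. (This is the same ``either $3$ or nothing'' phenomenon used in Lemma~\ref{lemmaa4} and Lemma~\ref{lemmaxeqy}, and I would reuse that bookkeeping.) Hence $a\mid 27$ up to the primitivity rescaling, leaving only $a\in\{1,3,9,27\}$ (and their relation to $x+2y=\pm 3^j(\dots)$) as genuinely live possibilities for $a>0$; negative $a$ and $a$ with a prime factor $\neq 3$ are excluded at this stage because they would force $p\mid\GCD(x,y)$.

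Next I would dispatch the small cases. For $a=1$ there are no primitive solutions at all (already noted after Eq.~(\ref{eq:general3tuv})). For $a=27$: here $a$ is a cube, so the equation is $27(x^3+2y^3)=(x+2y)^3$, i.e.\ $(\tfrac{x+2y}{3})^3 = x^3+2y^3$ with $\tfrac{x+2y}{3}\in\mathbb Z$; writing $w=(x+2y)/3$ one gets $w^3 - x^3 = 2y^3$, and I would feed this into the same descent/Fermat-type analysis that powered Lemma~\ref{lemmaxeqy} — in fact, up to scaling it is exactly the shape $x^3+2y^3=2$ handled there once one normalizes out $\GCD$, or it reduces to $w^3-x^3=2y^3$ which has only the trivial solution $y=0$ by a short argument (e.g.\ reducing mod $9$ and invoking FLT$_3$ on the factorization). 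The case $a=3$ similarly reduces to $3(x^3+2y^3)=(x+2y)^3$; expanding, $(x+2y)^3 = 3x^3 + 6y^3$, and one checks $3\mid x+2y$ is forced, then $9\mid x+2y$ is \emph{not}, giving a contradiction from $3$-adic valuations — or, more cleanly, one reduces it to an instance of Eq.~(\ref{eq:xy2}) and cites Lemma~\ref{lemmaxeqy}. The upshot of this paragraph is that the only value of integer $a\neq 0,9$ producing a primitive solution with $y=z$ would have to survive all these filters, and none does; this establishes the first sentence of the theorem.

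Finally, for $a=9$: the equation $9(x^3+2y^3)=(x+2y)^3$ must be solved outright over $\mathbb Z$ with $\GCD(x,y)=1$, $xy\neq 0$, $y=z>0$. Setting $t=x/y$ rationally, $9(t^3+2)=(t+2)^3$, a cubic in $t$ with rational root test giving the candidates; multiplying through, $9x^3+18y^3 = x^3+6x^2y+12xy^2+8y^3$, i.e.\ $8x^3 - 6x^2y - 12xy^2 + 10y^3 = 0$, which (dividing by $2$) is $4x^3 - 3x^2y - 6xy^2 + 5y^3 = 0$. Factoring: $y=x$ is a root, so $4x^3-3x^2y-6xy^2+5y^3 = (x-y)(4x^2+xy-5y^2) = (x-y)(4x+5y)(x-y) = (x-y)^2(4x+5y)$ — wait, I would verify this factorization carefully, but it is a genuine polynomial identity to be checked once. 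Then $(x-y)^2(4x+5y)=0$ forces $x=y$ or $4x=-5y$, giving the two primitive solutions $\{x,y,z\}=\{1,1,1\}$ and (from $x=-5,\,y=4$, coprime) $\{-5,4,4\}$, exactly as claimed for $y=z>0$. The main obstacle I anticipate is \emph{not} the $a=9$ endgame (which is elementary polynomial factorization) but rather making the ``$a$ must divide $27$'' reduction fully rigorous — in particular handling the $3$-adic valuation bookkeeping when $3\mid x+2y$, and cleanly reducing the residual $a=3$ and $a=27$ cases to Lemma~\ref{lemmaxeqy} (or to a direct FLT$_3$ argument) rather than re-deriving descent from scratch. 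That reduction is where all the number-theoretic content lives, and where I would spend most of the write-up.
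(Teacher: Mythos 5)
Your $a=9$ endgame is correct and in fact more elementary than the paper's: the identity $(x+2y)^3-9(x^3+2y^3)=-2(x-y)^2(4x+5y)$ does hold, so for $a=9$ the $y=z$ case reduces by pure factorization to $x=y$ or $4x+5y=0$, giving $\{1,1,1\}$ and $\{-5,4,4\}$, where the paper instead invokes Nagell's theorem on $x^3+2y^3=3$. The fatal problem is the reduction that precedes it. The claim that $x^3+2y^3$ and $(x+2y)^3$ can share only a power of $3$, hence that $a\mid 27$, is false. First, $(x+2y)^3-(x^3+2y^3)=6y(x+y)^2$, not ``$3\cdot(\text{stuff})$'', so the prime $2$ survives your argument --- and it must, since $a=2^{n_2}$ has to be confronted (this is exactly why the paper proves Lemma~\ref{lemmaxeqy} about $x^3+2y^3=2$). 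Second, and decisively, no congruence or gcd argument can exclude a prime $p>3$ dividing $a$ to a multiplicity divisible by $3$: if $p^{3k}\,\|\,a$, one simply gets $p^{k}\,\|\,(x+2y)$ with $p\nmid(x^3+2y^3)$ (indeed $x\equiv-2y$ gives $x^3+2y^3\equiv-6y^3\not\equiv 0\bmod p$, since $p\mid y$ would force $p\mid x$), and there is no contradiction with $\gcd(x,y)=1$. A sign is likewise not a prime, so negative $a$ is not ``excluded at this stage''. Your stated mechanism for exclusion thus says nothing about $a=-1$, $a=8$, $a=125$, $a=-343$, \ldots, and none of your subsequent cases touches them; the first sentence of the theorem, which quantifies over all integers $a\neq 0,9$, is therefore not proved.

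What the local analysis actually yields is only $a=2^{n_2}3^{n_3}b^3$ with $\gcd(b,6)=1$ and $b$ of either sign. The paper then splits the equation into $x+2y=bc$ and $2^{n_2}3^{n_3}(x^3+2y^3)=c^3$, pins down $c$ and the admissible $(n_2,n_3)$ by $2$- and $3$-adic valuations, and is left with the four Mordell-type equations $x^3+2y^3=1,2,3,6$. Disposing of these is the real number-theoretic content: it requires the Delaunay--Nagell uniqueness theorems (for the values $1$, $3$, $6$) together with Lemma~\ref{lemmaxeqy} (for the value $2$), and it is precisely the two solutions of $x^3+2y^3=3$ that force $b=1$, $n_3=2$, i.e.\ $a=9$. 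Your plan contains no substitute for this global input; even your residual case $a=27$ is misidentified, since $w^3-x^3=2y^3$ is a homogeneous three-variable equation (handled classically via $A^3+B^3=2C^3$), not ``exactly the shape $x^3+2y^3=2$'' of Lemma~\ref{lemmaxeqy}.
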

\begin{proof}
For $y = z$, Eq.~(\ref{eq:general3variables}), takes the form 
\be
a \left(x^3  + 2 y^3   \right) =   (x + 2y  )^3  ~~.
\label{eq:yz}
\ee
For $|a|  \ge  2$, let $p \ge 2$ be a prime divisor of $a$, 
and $n_p \ge 1$ be the multiplicity of $p$ in the prime factorization of $a$.
Eq.~(\ref{eq:yz}) requires $x \equiv - 2y \; {\rm mod} \; p $, which together with \GCD$(x,y) = 1$ implies $p \nmid y$.
If  $3 \nmid n_p$, then Eq.~(\ref{eq:yz}) also requires $x^3  \equiv - 2 y^3 $ mod$\, p$, which gives $p | (6 y^3)$.
Hence, $p = 2$  or   $p = 3$, so that $a$ is of the form $a = 2^{n_2} 3^{n_3} b^3 $, 
with $n_2, n_3, b \in  \mathbb{Z}$, $n_2, n_3 \ge 0$, $b \neq 0$, $2\nmid b$  and $3\nmid b$.      
Eq.~(\ref{eq:yz}) then splits into two equations:
\bear
x + 2y  = b \, c  ~~, 
 \label{eq:xybc1}
\\ [1mm]
2^{n_2} 3^{n_3}( x^3  + 2 y^3 ) = c^3  ~~,
 \label{eq:xybc2}
\eear
where $c  \in  \mathbb{Z}_{\neq 0} $.  Any solution $\{x,y\}$ for $c < 0$ 
implies the solution $\{- x, -y\}$ for $c \to -c$, so it is sufficient to consider  $c > 0$.
The first equation requires  \GCD$(c,y) = $  \GCD$(b,y) = 1$.
Eliminating $x$ from  (\ref{eq:xybc1}) gives 
\be
 2^{n_2+1} 3^{n_3+1}  y (b \, c - y)^2
 = \left( 2^{n_2} 3^{n_3}  \, b^3  - 1 \right) c^3    ~~.
\label{eq:ybc}
\ee
Since \GCD$(c,y(b  c - y) ) = 1$, $c$ cannot have  prime divisors other than 2 or 3.
Given that the parenthesis on the right-hand side of (\ref{eq:ybc})  is not divisible by 2 unless $n_2 = 0$,
and not divisible by 3 unless $ n_3 = 0$,
the above equation can be satisfied only in four cases: 

\noindent
{\it Case 1) }  $n_2 = n_3 = 0$, $c=1$.  
 Eq.~(\ref{eq:xybc2}) becomes $x^3 + 2 y^3  = 1$,
which has no solution with $y\neq 0$ other than $y = -x = 1$  (see \cite{Nagell1969} or page 34 of \cite{Smart1998}).
Thus,  (\ref{eq:xybc1}) implies $b = 1$, so that $a=1$, and Eq.~(\ref{eq:yz}) 
has no primitive solution. 

\noindent
 {\it Case 2) }   $n_2 \equiv 2 \,$mod$\, 3$, $n_3 = 0$,  $c = 2^{(n_2+1)/3}$.   
Eq.~(\ref{eq:xybc2}) is given by $x^3  + 2 y^3  = 2 $, which following Lemma \ref{lemmaxeqy} has no solution with $x\neq 0$. 

\noindent
 {\it Case 3) }  
$n_2 = 0$, $n_3 \equiv 2 \,$mod$\, 3$, $c = 3^{(n_3+1)/3}$. 
Eqs.~(\ref{eq:xybc1})-(\ref{eq:xybc2}) are now $ x + 2y  = 3^{(n_3+1)/3}  \, b$ and $ x^3  + 2 y^3  = 3  $.
The only solutions to the latter equation are $x = y= 1$ and $x= -5$, $y = 4$ \cite{Haggmark1950, 
Nagell1969}. Both solutions correspond to $n_3 = 2$ and $b=1$, so $a = 9$.

\noindent
{\it Case 4) }  $n_2 \equiv 2 \,$mod$\,  3$, $n_3 \equiv 2 \,$mod$\,  3$,   $c = 2^{(n_2+1)/3} 3^{(n_3+1)/3}$.  Eq.~(\ref{eq:xybc2}) becomes  $x^3  + 2 y^3  = 6 $,  and Nagell's theorem  \cite{Haggmark1950, Nagell1969} allows at most one solution.
A solution exists, namely $x = 2$, $y = -1$,  but in this case (\ref{eq:xybc1})  
gives $b  = 0$, so this is not a solution with $a\neq 0$. 
\end{proof}

\subsection{General solution for $a=9$}
\label{sec:a9}

Focusing on Eq.~(\ref{eq:general3variables}) with $a = 9$,
\be
9 \, (x^3  + y^3  + z^3 ) =   (x + y + z)^3  ~~,
\label{eq:a9n3}
\ee
we present a 2-parameter solution, and prove that it is the general solution.  There exist trivial (vectorlike) solutions to 
(\ref{eq:a9n3}) of the type $x = - y$, $z = 0$, and obvious permutations.  All other solutions have $x y z\neq 0$ and are primitive up to 
a  rescaling.  Among these,  Theorem \ref{theorema9} identifies 
$\{1,1,1\}$ and $\{5,-4,-4\}$, which up to an overall sign or a reordering are the only primitive solutions to Eq.~(\ref{eq:general3variables}), for any $a$, with two equal variables.

\begin{lemma}
\label{theorem:generala9}
All integer solutions $\{ x, y, z\}$ of Eq.~{\rm (\ref{eq:a9n3})} with \emph{\GCD}$(x,y,z) = 1$, other than $x=y=z=\pm 1$, 
are given by
\be
\{ x, y, z\} =  \{ \, \left( \ell_1 + \ell_2 \right)   \left(   \ell_1^2 + \ell_2^2  +  \frac{\ell_1 \ell_2 }{2}    \right)    
\; , \; 
 \ell_2^3 - x  
 \; , \; 
 \ell_1^3 - x \,  \}   ~~,
 \label{eq:n3}
\ee
where the integer parameters $\ell_1, \ell_2$ are coprime.
\end{lemma}
\begin{proof}
Changing variables through the inverse of the transformation (\ref{eq:xyztotuv}), which is
\bear
t = x + y \;\; , \;\;\;  u = y + z  \;\; , \;\;\;  v = z + x   ~~,
\label{eq:tuv-xyz}
\eear
we find that Eq.~(\ref{eq:a9n3}) becomes 
\be
 \left( \frac{ t+u+v }{ 3 } \right)^{\! 3} = t \, u \, v  ~~.
 \label{eq:tuvp}
\ee
For primitive solutions, $xyz \neq 0$ so that $tuv \neq 0$, and \GCD$(x,y,z) = 1$ requires $t,u,v$ to be pairwise coprime, unless $|xyz|=1$ in which case $t=u=v=\pm 2$.  We ignore the case of $x=y=z=\pm 1$ from now on.  Since (\ref{eq:tuvp}) implies that $t u v$ is a perfect cube, each of  $t$, $u$, $v$ must be a perfect cube.  Without loss of generality, take $t =\ell_2^3$ and $v=\ell_1^3$ with \GCD$(\ell_1,\ell_2)=1$, which gives $y$ and $z$ in (\ref{eq:n3}). 
 Setting $u=u_0^3$, (\ref{eq:tuvp}) becomes 
\begin{equation}
\left(u_0+\ell_1+\ell_2\right)\left(u_0^2 -(\ell_1+\ell_2)u_0 +\ell_1^2 -\ell_1\ell_2+\ell_2^2 \rule{0mm}{4mm} \right)=0~~.
\label{eq:u0eq}
\end{equation}
The solution $u_0=-\ell_1-\ell_2$ gives the expression for $x$ in  (\ref{eq:n3}). 
There is no other integer solution, because the second factor in (\ref{eq:u0eq}) is quadratic in $u_0$ and has discriminant $-3(\ell_1-\ell_2)^2$.
Note that the solution with zero discriminant,  which  is $t=u=v=\pm 1$, does not correspond to $x,y,z \in\mathbb{Z}$.

For vectorlike solutions $xyz=t\, u \, v=0$.  Any vectorlike solution with \GCD$(x,y,z)=1$ is obtained from (\ref{eq:n3})
when $\ell_1=\pm 1, \ell_2=0$, or $\ell_1=0, \ell_2=\pm 1$, or $\ell_1 =-\ell_2=\pm 1$.
\end{proof}

\begin{corollary}
For any primitive solution to Eq.~{\rm (\ref{eq:a9n3})} other than $\{ 1,1,1\}$, the sum of any two variables is a perfect cube. 
\end{corollary}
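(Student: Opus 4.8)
The plan is to read the conclusion off the parametrization already established in Lemma~\ref{theorem:generala9}. By Definition~\ref{def:primitive}, a primitive solution $\{x,y,z\}$ to Eq.~(\ref{eq:a9n3}) satisfies $\gcd(x,y,z)=1$ and $xyz\neq 0$; hence, excluding $x=y=z=\pm1$ (the stated exception), it is given by (\ref{eq:n3}) for some coprime integers $\ell_1,\ell_2$, possibly after a reordering of the variables. Since a reordering merely permutes the three pairwise sums $x+y,\ y+z,\ z+x$, it is enough to verify the claim for the representative triple (\ref{eq:n3}) itself.

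The key observation is that the auxiliary variables of (\ref{eq:tuv-xyz}), namely $t=x+y$, $u=y+z$, $v=z+x$, were shown in the proof of Lemma~\ref{theorem:generala9} to be perfect cubes: $t=\ell_2^3$, $v=\ell_1^3$, and $u=u_0^3$ with $u_0=-\ell_1-\ell_2$. Alternatively one can check this directly from (\ref{eq:n3}): the relations $x+y=\ell_2^3$ and $x+z=\ell_1^3$ are immediate from the definitions of $y$ and $z$, while expanding $y+z=\ell_1^3+\ell_2^3-2x$ and simplifying $2x=(\ell_1+\ell_2)(2\ell_1^2+2\ell_2^2+\ell_1\ell_2)=2\ell_1^3+2\ell_2^3+3\ell_1^2\ell_2+3\ell_1\ell_2^2$ yields $y+z=-(\ell_1+\ell_2)^3$. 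Thus the three pairwise sums are $\ell_2^3$, $\ell_1^3$, and $\bigl(-(\ell_1+\ell_2)\bigr)^3$, each the cube of an integer, which is the assertion.

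I expect no real obstacle here: the corollary is an immediate bookkeeping consequence of the structure exhibited in Lemma~\ref{theorem:generala9}. The only points requiring a word are that (\ref{eq:n3}) genuinely captures \emph{every} primitive solution (which it does, being the list of all integer solutions with $\gcd(x,y,z)=1$ other than $x=y=z=\pm1$), and that the excluded triple $\{1,1,1\}$ — together with its sign image $\{-1,-1,-1\}$ — must indeed be excluded, since there every pairwise sum equals $2$ (resp.\ $-2$), which is not a perfect cube.
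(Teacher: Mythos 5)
Your proof is correct and follows essentially the same route as the paper: the corollary is read off directly from the parametrization in Lemma~\ref{theorem:generala9}, with $x+y=\ell_2^3$, $z+x=\ell_1^3$, and $y+z=-(\ell_1+\ell_2)^3$. Your added direct verification of the third sum and the remark on why $\{1,1,1\}$ must be excluded are harmless elaborations of the same argument.
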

\begin{proof}
Theorem~\ref{theorem:generala9} implies that  
\bear
&& t = x+y=\ell_2^3~~, \nonumber\\ [1mm]
\label{eq:xyztotuvtol}
&& u = y+z=-(\ell_1 + \ell_2)^3~~, \\  [1mm]
&& v = z+x =\ell_1^3~~,    
\nonumber  \\ [-0.8cm]  \nonumber 
\eear
where $\ell_1,\ell_2 \in \mathbb{Z}$. 
\end{proof}

\begin{remark}
Using transformation (\ref{eq:anomtoelliptictrans}) for $a = 9$, we see that Eq.~(\ref{eq:a9n3})  becomes a singular curve in $X$ and $Y$. 
Setting $X = 3^4 ( \tilde{X} - 1)$, $Y = 3^6  \tilde{Y}$, the singular curve takes the form
\begin{equation}
 \tilde{Y}^2 =   \tilde{X}^2 \left(  \tilde{X}- 3\right) ~~~.
\end{equation}
All rational solutions to this equation other than $  \tilde{X} =  \tilde{Y} = 0$ (which corresponds to $x=y=z$) 
are given by $  \tilde{X} = \mu^2 + 3$, $  \tilde{Y} = \mu (  \mu^2 + 3)$, with $\mu \in \mathbb{Q}$.
After removing a common factor of $- 3^6$, this corresponds to the solution   
\be
\{  x,y,z \} = 
\{ \, 3 \mu^2 + 5   \; , \;  
 - \left( \mu^3 + 3 \mu + 4 \right)    \; , \; 
  - y - 8  \, \}  ~~.
  \label{eq:xyzmu}
\ee
If $\mu$ is noninteger, this is a rational solution that can be turned into an integer one by an overall rescaling.
Taking $\mu=\left(\ell_1-\ell_2\right)/\left(\ell_1+\ell_2\right)$, and removing an overall factor of 8, reproduces the solution (\ref{eq:n3}).
This provides an alternative proof to Lemma \ref{theorem:generala9}. Note that the vectorlike solution $y+z = 0$ (which corresponds to $\ell_1 = -\ell_2$) 
cannot be recovered for any finite $\mu$. However, the ordering of $x,y,z$ in  (\ref{eq:xyzmu}) is arbitrary, and changing it would allow the $y+z = 0$ solution.
The fact  that not all vectorlike solutions can be obtained for the same ordering is true for any $a$, as can be seen in transformation (\ref{eq:anomtoelliptictrans}).
\end{remark}

\begin{theorem}\label{Proposition-ells}
\ The general solution to Eq.~{\rm (\ref{eq:a9n3})} is  given by 
\bear 
&& x =  \frac{1}{2} \left(  \ell_1^3 + \ell_2^3  + \left( \ell_1 + \ell_2 \right)^3  \rule{0mm}{4mm}  \right)    +  \delta_{\ell_1,0} \, \delta_{\ell_2,0}   ~~,
\nonumber \\ [2mm]
&& y =   \frac{1}{2} \left( - \ell_1^3 + \ell_2^3  -  \left( \ell_1 + \ell_2 \right)^3  \rule{0mm}{4mm}  \right)    +  \delta_{\ell_1,0} \, \delta_{\ell_2,0}  ~~,
\label{eq:general9-deltas}
 \\ [2mm]
&&  z =   \frac{1}{2} \left( \ell_1^3 - \ell_2^3  -  \left( \ell_1 + \ell_2 \right)^3     \rule{0mm}{4mm}  \right)     +  \delta_{\ell_1,0} \, \delta_{\ell_2,0}  ~~,
\nonumber
\eear  
with the two integer parameters satisfying $\ell_1 \ge \ell_2 \ge 1$ and $\ell_1, \ell_2$ coprime, or $\ell_1 = \ell_2 = 0$.  Each 
solution is generated by a unique $\ell_1,\ell_2$ pair.
\end{theorem}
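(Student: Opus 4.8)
The plan is to observe that, for $(\ell_1,\ell_2)\neq(0,0)$, the triple (\ref{eq:general9-deltas}) is nothing more than (\ref{eq:n3}) rewritten, via $(\ell_1+\ell_2)\bigl(\ell_1^2+\ell_2^2+\tfrac12\ell_1\ell_2\bigr)=\tfrac12\bigl(\ell_1^3+\ell_2^3+(\ell_1+\ell_2)^3\bigr)$ together with $y=\ell_2^3-x$, $z=\ell_1^3-x$, while the Kronecker factor merely reassigns the value at $(\ell_1,\ell_2)=(0,0)$ to be $\{1,1,1\}$. So the core of the statement is already Lemma~\ref{theorem:generala9}, and the genuine content of the theorem is twofold: that the restricted range $\ell_1\ge\ell_2\ge1$ (plus $(0,0)$) produces exactly the \emph{primitive} solutions, and that the assignment of allowed pairs to those solutions — taken up to reordering and overall sign — is a bijection.

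First I would check that (\ref{eq:general9-deltas}) always solves (\ref{eq:a9n3}): the entries are integers (a quick parity check), $(0,0)$ gives $\{1,1,1\}$, and for $(\ell_1,\ell_2)\neq(0,0)$ one passes to the sums $t=x+y$, $u=y+z$, $v=z+x$ of (\ref{eq:tuv-xyz}); a one-line computation gives $t=\ell_2^3$, $v=\ell_1^3$, $u=-(\ell_1+\ell_2)^3$, hence $t+u+v=-3\ell_1\ell_2(\ell_1+\ell_2)$ and $tuv=-\bigl(\ell_1\ell_2(\ell_1+\ell_2)\bigr)^3$, so $\bigl((t+u+v)/3\bigr)^3=tuv$, which is Eq.~(\ref{eq:tuvp}) and therefore equivalent to (\ref{eq:a9n3}). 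When $\ell_1,\ell_2$ are coprime the sums $t,u,v$ are pairwise coprime, which forces $\gcd(x,y,z)=1$ (an odd common prime of $x,y,z$ would divide two of $t,u,v$; the prime $2$ would make $\ell_1,\ell_2$ both even); and for $\ell_1\ge\ell_2\ge1$ one has $t,v>0>u$, so no pair among $x,y,z$ cancels and, testing $x=0$, $y=0$, $z=0$ against these inequalities, $xyz\neq0$. Thus each allowed pair gives a primitive solution, and $(0,0)$ gives the primitive solution $\{1,1,1\}$.

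For completeness I would take an arbitrary primitive solution $\{x,y,z\}$. If it is $\{1,1,1\}$ up to sign it comes from $(0,0)$; otherwise it already has $\gcd=1$, so Lemma~\ref{theorem:generala9} writes it as (\ref{eq:n3}), i.e. as (\ref{eq:general9-deltas}), for some coprime $(\tilde\ell_1,\tilde\ell_2)$, with $\tilde\ell_1,\tilde\ell_2,\tilde\ell_1+\tilde\ell_2$ all nonzero because there is no vectorlike pair. Then I would normalize: the three pairwise sums are $\tilde\ell_2^3,\tilde\ell_1^3,-(\tilde\ell_1+\tilde\ell_2)^3$, so their integer cube roots $\tilde\ell_2,\tilde\ell_1,-(\tilde\ell_1+\tilde\ell_2)$ are nonzero and sum to $0$, hence exactly two of them are positive; an overall sign flip of $\{x,y,z\}$ negates all three, so we may assume two positive and one negative; a permutation of $(x,y,z)$ permutes the three sums, so we may move the lone negative sum into the $y+z$ position; finally a $y\leftrightarrow z$ swap orders the two positive cube roots as $\ell_1\ge\ell_2$. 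This leaves $\ell_1\ge\ell_2\ge1$, still coprime, and since the linear system $x+y=\ell_2^3$, $z+x=\ell_1^3$, $y+z=-(\ell_1+\ell_2)^3$ has determinant $2$, this $(\ell_1,\ell_2)$ reproduces the reordered, sign-adjusted solution through (\ref{eq:general9-deltas}).

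For uniqueness, any ordered triple produced by (\ref{eq:general9-deltas}) with $(\ell_1,\ell_2)\neq(0,0)$ satisfies $z+x=\ell_1^3$ and $x+y=\ell_2^3$, so $\ell_1,\ell_2$ are the unique real cube roots and distinct pairs give distinct triples; moreover, even up to reordering and sign the normalization above is forced, since a pair with $\ell_1\ge\ell_2\ge1$ yields pairwise sums that are two positive cubes (the larger being $\ell_1^3$) and one negative cube, a pattern compatible with the $y+z$-slot and $\ell_1\ge\ell_2$ conventions only for the identity reordering and no sign flip, while $\{1,1,1\}$ is a one-point orbit matched uniquely by $(0,0)$. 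I expect the only place requiring genuine care is exactly this normalization and uniqueness bookkeeping for the $S_3$ reordering and the sign symmetry; the algebraic identities and the reduction to Eq.~(\ref{eq:tuvp}) are routine.
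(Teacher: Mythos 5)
Your proposal is correct and follows essentially the same route as the paper: completeness is delegated to Lemma~\ref{theorem:generala9}, and the remaining content is the bookkeeping for the reordering and overall-sign symmetry. The only minor difference is that you normalize and prove uniqueness via the sign pattern of the pairwise sums $t=\ell_2^3$, $v=\ell_1^3$, $u=-(\ell_1+\ell_2)^3$ (exactly two positive, one negative), whereas the paper exhibits explicit transformations on $(\ell_1,\ell_2)$ --- the swap, the overall sign flip, and $(\ell_2,\ell_1)\to(-\ell_2,\ell_1+\ell_2)$ --- and settles uniqueness by observing that $x\ge|y|\ge|z|\ge1$ for $\ell_1\ge\ell_2\ge1$; both versions are sound.
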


\begin{proof} \
Following Definition \ref{def:general}, we need to show that any primitive solution  to Eq.~(\ref{eq:a9n3})
is obtained for some values of $\ell_1$, $\ell_2$, up to a variable reordering or an overall sign change.
For $\ell_1 = \ell_2 = 0$, (\ref{eq:general9-deltas}) gives the primitive solution $\{  1,1,1  \}$. 
For any $\ell_1 \neq 0$ and  $\ell_2 \neq 0$, it is straightforward to check that (\ref{eq:general9-deltas}) represents 
a solution with $x y z \neq 0$, which means that it is a primitive solution (see Definition \ref{def:primitive}). 

Theorem \ref{theorem:generala9} ensures that any primitive solution other than $\{  1,1,1  \}$ is obtained for 
certain values of $\ell_1$ and $\ell_2$. Primitive solutions have \GCD$(x,y,z) = 1$, and thus can be obtained only 
for \GCD$(\ell_1,\ell_2) = 1$. The reverse is also true, because 
\GCD$(x,y,z) =$  \GCD$(x + y, y + z, z + x) =$ \GCD$(\ell_1 , \ell_2) $.

The interchange of $\ell_1$ and $\ell_2$ leads only to the interchange of $x$ and $y$, and thus the same primitive solution
up to a reordering. Likewise, a sign flip of both $\ell_1$ and $\ell_2$  only flips the sign of the whole set $\{x, y, z\}$. 
Hence, it is sufficient to take $\ell_1 \ge |\ell_2|$. 

Solution (\ref{eq:general9-deltas}) is also invariant under 
\bear
 \ell_2  \,  ,   \ell_1    & \to &     
 - \ell_2  \, , \,  \ell_1 + \ell_2  
~~~,
 \nonumber \\ [-2mm]
 \\ [-3mm]
 x   \,  ,  y    & \to &    - y   \,  ,   -x
 ~~~,
 \nonumber 
\eear
so it is sufficient to take $\ell_2 \ge 1$.
It remains to show that no primitive solution can be obtained for two different values of the $\ell_1,\ell_2$ pair.
To see that, note that the solution (\ref{eq:general9-deltas}) satisfies  (\ref{eq:xyztotuvtol}).
Furthermore, for $\ell_1 \ge \ell_2 \ge 1$ all the solutions satisfy $x \ge |y| \ge |z| \ge 1$,  
so a different $\ell_1,\ell_2$ pair cannot lead to the same solution with a different 
ordering of the variables.
Hence, for each primitive solution $\{ x, y, z\}$  there is a unique choice of $\ell_1,\ell_2$  (the simplest examples are collected in Table  \ref{table:1}). 
\end{proof}

\begin{table}[b!]
\begin{center}
\caption{All primitive solutions to equation (\ref{eq:a9n3}) with $x \le 200$ generated by the general solution (\ref{eq:general9-deltas}), 
and  the corresponding $\ell_1, \ell_2$ parameters. 
 }
 \vspace*{1mm}
\renewcommand{\arraystretch}{1.2}
\begin{tabular}{|c|c|}\hline  
 Primitive solution  $(a=9)$   &   $\ell_1 \, , \ell_2$   
\\ \hline \hline
    $ \{ 1, 1, 1 \} $   &        \hspace*{5mm} $ 0 \, , 0  $   \hspace*{5mm} 
\\   \hline
   \hspace*{5mm}     $  \{ 5, -4, -4  \}      \hspace*{5mm}   $    &   \hspace*{5mm} $  1 \, , 1  $   \hspace*{5mm}   
\\   \hline
   \hspace*{5mm}     $  \{ 18, -17, -10  \}      \hspace*{5mm}   $    &   \hspace*{5mm} $  2 \, , 1  $   \hspace*{5mm}  
\\   \hline
   \hspace*{5mm}     $  \{ 46, -45, -19  \}      \hspace*{5mm}   $    &   \hspace*{5mm} $  3 \, , 1  $   \hspace*{5mm}  
   \\     \hline
   \hspace*{5mm}     $  \{ 80, -72, -53  \}      \hspace*{5mm}   $    &   \hspace*{5mm} $ 3 \, ,  2  $   \hspace*{5mm}  
   \\       \hline
   \hspace*{5mm}     $  \{ 95, -94, -31  \}      \hspace*{5mm}   $    &   \hspace*{5mm} $ 4 \, , 1  $   \hspace*{5mm}  
  \\         \hline
   \hspace*{5mm}     $  \{ 171, - 170, - 46  \}      \hspace*{5mm}   $    &   \hspace*{5mm} $  5 \, , 1  $   \hspace*{5mm}  
\\  \hline 
\end{tabular}  
\label{table:1}
\end{center}
\end{table}

A solution to (\ref{eq:a9n3}) in terms of three integer parameters is given in \cite{Allanach:2019uuu};
since the number of parameters is equal to the number of variables, that does not represent a general solution (see Definition \ref{def:general}).
A solution  to (\ref{eq:tuvp})  in terms of a  rational parameter given in \cite{BremnerGuy} can be shown to be equivalent to 
(\ref{eq:xyztotuvtol}),  but the generality of the solution is not investigated there.

\subsection{Properties of primitive solutions} 
\label{sec:na0orinf}

\begin{theorem}\label{TheoremIntegerNa}
\ For any fixed $a \in \mathbb{Z}$, the number of primitive solutions to 
Eq.~(\ref{eq:general3variables}) is either 0 or $\infty$.
\end{theorem}
\begin{proof}
From Theorem~\ref{Proposition-ells} follows that the number $n_a$ of primitive solutions to Eq.~(\ref{eq:general3variables}) 
for $a=9$ is infinite. As $n_a = 0$ for $a=1$, and $n_a = \infty$ for $a=0$, it is sufficient to consider integer $a$ values 
for which the discriminant (\ref{eq:disc}) is nonzero.
As shown in the proof to Proposition~\ref{PropositionNa},  $n_a$ is finite and nonzero iff 
the elliptic curve (\ref{eq:ellipticcurve}) has rank 0 and the torsion group given by one of the following 
four possibilities:
$\mathbb{Z}_6$, $\mathbb{Z}_9$, $\mathbb{Z}_{12}$ or $\mathbb{Z}_2 \times \mathbb{Z}_6$.
With the exception of $\mathbb{Z}_9$, all these torsion groups imply that at least one 
primitive solution has two equal variables. The latter condition is forbidden by 
Theorem \ref{theorema9}  when $a\in \mathbb{Z}$. 

It thus remains to show that the torsion group cannot be $\mathbb{Z}_9$ for integer $a$.
Theorem \ref{TheoremZ9} establishes that if $\mathbb{Z}_9$ is the torsion group 
of (\ref{eq:ellipticcurve}), then 
\be
a = \frac{q^3}{q^3 - 3 \, p^3}   ~~,
\ee
where $p,q \in \mathbb{Z}$ are coprime, $pq \neq 0$, and
\be
\frac{p}{q} = \frac{2 f ( f - 1) }{f^3 - 3 f^2 + 1} 
\label{eq:pqf}
\ee
with $f \in \mathbb{Q}$, $f \neq 0,1$.
As $a$ is an integer, it follows that $(q^3 - 3 p^3) | q^3$, and 
gcd$(p,q) = 1$ implies that $q^3 - 3 p^3 = 1$, or $q = 3 q_0$ and $9q_0^3 - p^3 = 1$ for $q_0 \in \mathbb{Z}$.
The first of these  two cubic Thue equations has
no solution with $pq \neq 0$. The  $9q_0^3 - p^3 = 1$ equation with $q_0\neq 0$ has only the solution 
 $p=2$, $q_0 = q/3=1$, which is not consistent with the constraint (\ref{eq:pqf}) for any rational $f$.
\end{proof}
Theorem~\ref{TheoremIntegerNa} implies that for any integer $a \neq 0,1,9$ 
the torsion group is $\mathbb{Z}_3$, and its elements are the vectorlike solutions.
Furthermore, there are no primitive solutions iff the elliptic curve (\ref{eq:ellipticcurve}) has rank 0, 
which can be checked using computer algebra systems.
Theorem~\ref{TheoremIntegerNa} also implies that when primitive solutions exist for integer $a$, their number is infinite.
In Table \ref{table:alist10} we show results for all integer $a \neq 0$ with $|a| \leq 10$, indicating 
either a direct proof, or the rank of  (\ref{eq:ellipticcurve}) computed with \texttt{PARI/GP}.
In all the cases other than $a = 9$ shown in Table \ref{table:alist10} where there are primitive solutions (only two of the low-lying ones are displayed), the rank is 1.

\begin{table}[t]
\begin{center}
\caption{Number $n_a$ of primitive solutions to (\ref{eq:general3variables}) for $a \in \mathbb{Z} $,  $|a| \leq 10$.
}
\vspace*{1mm}
\renewcommand{\arraystretch}{1.2}   
\begin{tabular}{|c|cc|c|}\hline  
 {\large $a$} &  {\large $n_a$ }     
 &   Proof   &   Low-lying  primitive solutions 
\\ \hline \hline
1 &   0    & See  (\ref{eq:general3tuv})    &  \\
$-1$ &  $\infty$  & rank 1,  Proposition~\ref{PropositionNa} 
&   $  \{6,-5,-4\} $   ,  \   $  \{1670, -1661, -339\} $       \\ \hline
2 &   0    &    rank 0, Theorem~\ref{TheoremIntegerNa}   &  \\ 
$-2$ &   0   &   Theorem \ref{theoremnosolnp}   &    \\ \hline
3 &  $\infty$  &   rank 1,  Proposition~\ref{PropositionNa}    &   $  \{10,-9,-7\} $   , \   $  \{190,153,-28\} $   \\
$ -3$  &    0     &   Theorem \ref{theromDofs2}   &   \\ \hline
4 & 0 &    Theorem \ref{theoremnosolnp}  &    \\
$ -4$  &  $\infty$   &  rank 1, Proposition~\ref{PropositionNa}    &   $ \{12,-11,-7\} $  ,    $ \{ 23807,-22655,-11640\} $    \\ \hline
5 &    0    &     rank 0, Theorem~\ref{TheoremIntegerNa}  &  \\
$ -5$  &    0    &    Theorem \ref{theromDofs2}        &   \\ \hline
6 & $\infty$  &    rank 1,  Proposition~\ref{PropositionNa}     &   $  \{3,2,1\} $  , \  $ \{20,-17,-15\} $   \\
$ -6$   &    0   &    rank 0, Theorem~\ref{TheoremIntegerNa}  & \\ \hline
7 &   0  &   Theorem \ref{theromDofs2}      &   \\
$ -7$  &   0  &  rank 0, Theorem~\ref{TheoremIntegerNa} &  \\ \hline
8 & $\infty$   &    rank 1,  Proposition~\ref{PropositionNa}     &    $ \{5,4,3\} $   , \  $  \{40,-33,-31\} $    \\
$ -8$  &    0   &  rank 0, Theorem~\ref{TheoremIntegerNa}  &      \\ \hline
9 &  $\infty$     &     \hspace*{-0.2cm}  General solution   (\ref{eq:general9-deltas})  \hspace*{-0.2cm}   &     Table \ref{table:1}   \\
$ -9$  &   0     &  rank 0, Theorem~\ref{TheoremIntegerNa}    &  \\  \hline
$\pm 10$ & 0     &      rank 0, Theorem~\ref{TheoremIntegerNa}    &  \\  \hline
\end{tabular}
\label{table:alist10}
\end{center}
\end{table}

\begin{proposition}\label{propositionMerger}
\  If   $\{ x_i, y_i, z_i \}$, $i = 1,2$, are two different primitive solutions to 
Eq.~(\ref{eq:general3variables}) for any fixed $a \in \mathbb{Q}$,   then  a third solution is 
\be
\{ x_3, y_3, z_3 \} =
\left\{ \, 3   \left( 1 \! - a  \, w^2 \right) \!   - \bar x_1 \! - \bar x_2   \; ,  \; 
 \bar y_2 + w    \left(  x_3  \!  -   \bar x_2   \rule{0mm}{3.8mm} \right)   \; ,  \; 
 a - 1  - y_3   \,
 \rule{0mm}{3.85mm}   \right\}   ~  ,
 \label{eq:Propx3y3z3}
  \ee
where  \\ [-0.6cm]
   \bear
(  \bar x_i \; , \,   \bar y_i  )  
= \frac{  a - 1  }{ y_i + z_i }  \,  ( x_i \; , \, y_i )   \;\;\;  , \;\;\;\; 
 w =  \frac{   \bar y_2    -   \bar y_1   }{  \bar x_2  -   \bar x_1  }    ~~.
 \label{eq:Propx3y3z3defs}
\eear
The  $\{ x_3, y_3, z_3 \}$ solution is not vectorlike iff the two solutions $\{ x_i, y_i, z_i \}$, $i = 1,2$,
do not become identical upon an $x_2 \leftrightarrow y_2$ or  $x_2 \leftrightarrow z_2$ 
transposition and possibly an overall sign change.
If $\{ x_3, y_3, z_3 \} {\cal R} $ with ${\cal R} \in \mathbb{Q}$ is primitive, then 
$\{ x_3, y_3, z_3 \} {\cal R}  \neq  \{ x_i, y_i, z_i \}$ iff  $x_3 \neq \bar x_i$.
\end{proposition}
\begin{proof}
The existence of the primitive solutions $\{ x_i, y_i, z_i \}$, $i = 1,2$, implies $a\neq1$ and $y_i + z_i \neq 0$.
As the solutions are different,  $\bar x_1 \neq \bar x_2$, so that the quantities introduced in (\ref{eq:Propx3y3z3defs})  
do not have singularities.

Let us define the function $\zeta(x,y,z) =  a (x^3 + y^3 + z^3) - (x + y + z )^3$. We find 
\be
\zeta(x_3,y_3,z_3) =\frac{ a -1  }{ \left( \bar x_2 - \bar x_1 \right)^3  } 
\left( P(\bar x_1,\bar x_2, \bar y_1,\bar y_2 )  -  P(\bar x_2,\bar x_1, \bar y_2,\bar y_1 ) \rule{0mm}{3.87mm} \right) ~~.
\ee
The polynomial $P(\bar x_1,\bar x_2, \bar y_1,\bar y_2 ) $ 
is of degree 6, and is given by 
\be    \hspace*{0.1cm}
P(\bar x_1,\bar x_2, \bar y_1,\bar y_2 )  = \bar x_1^6 -  6\bar x_1^5
-3 \bar x_1^4  \left( \bar x_2^2  \!-\!  2\bar x_2   \! +  a  \!-\! 4 \right) +  
\bar x_1^3 \, C(\bar x_2, \bar y_1,\bar y_2)  ,
\ee
where $C(\bar x_2, \bar y_1,\bar y_2) $ is a quadratic polynomial in $\bar x_2, \bar y_1, \bar y_2$ 
with coefficients dependent only on $a$.
Since $\zeta(x_i ,  y_i ,  z_i )=0$ for $i = 1,2$, we obtain the identities
\be
\bar x_i^3 =  3   \bar x_i  \left( \bar x_i + a - 1 \right) - 3a  \bar y_i  \left( \bar y_i - a + 1 \right)  - (a - 1)^3  \;\; , \;\;  i = 1,2 ~~.
\label{eq:xi3}
\ee
We use repeatedly the above identity for $i =1$ to 
eliminate all the powers of $\bar x_1$ higher than 2 in $P(\bar x_1,\bar x_2, \bar y_1,\bar y_2 ) $. 
As a result, $\zeta(x_3,  y_3 ,  z_3 )$ is proportional to $\zeta(x_2 ,  y_2 ,  z_2 )$,
which proves that $\{x_3,y_3,z_3\}$ is a solution to Eq.~(\ref{eq:general3variables}).

The set of two equations $x_3/x_i = y_3/y_i = z_3/z_i$ is solvable iff $x_3 = \bar x_i$, for $i=1,2$. 
Given that the rational rescaling ${\cal R}$ is chosen such that gcd$(x_3 {\cal R}, y_3 {\cal R}, z_3 {\cal R}) = 1$, 
and also gcd$(x_i, y_i, z_i) = 1$,
it follows  that  $x_3 \neq \bar x_i$ is the necessary and sufficient condition for $\{ x_3, y_3, z_3 \} {\cal R} \neq \pm \{ x_i, y_i, z_i \} $.

It remains to determine in which cases is the new solution vectorlike.
The $y_3 , z_3$ variables cannot form a vectorlike pair because $y_3 + z_3 = a - 1 \neq 0$.
Before analyzing the other two pairs, note that $w\neq 0$ because $y_1 \neq y_2$.

If $x_3+ y_3 = 0$, then Eq.~(\ref{eq:general3variables}) gives $z_3 = 0$ because $a\neq 1$.
From (\ref{eq:Propx3y3z3}) then follows that  $y_3 = a -1$, and thus  $x_3 = 1 - a$,
which in turn leads to two constraints.  The first one is $w = - z_2 / ( x_2  + y_2  +  z_2)$, which is equivalent to
\be
\frac{z_2}{z_1} = \frac{x_2  + y_2 }{  x_1  + y_1 }  ~~.
\label{eq:firstcons}
\ee
The second  constraint is that  the expression for  $x_3$ in (\ref{eq:Propx3y3z3})
equals $1 - a$, and can be written as
\be
3 \, a \,  \left(\bar y_2 - a +1  \right)^2  = \left( a + 2 - \bar x_1 - \bar x_2 \right) \left(\bar x_2  + 1 \right)^2 ~~.
\ee
After eliminating $x_1$ from (\ref{eq:firstcons}) and inserting it in the second constraint, we find
\be
\frac{y_1}{z_1} = \frac{x_2}{z_2} +   \frac{ z_2 (-x_2 + y_2+ 2z_2) \, \zeta(x_2,y_2,z_2)  }{ 3 a z_2 (x_2+ y_2) (y_2 + z_2)^2  - z_2 \,\zeta(x_2,y_2,z_2) }  ~~.
\ee
Since $\zeta(x_2,y_2,z_2) =0$, we obtain $y_1/z_1 = x_2/z_2$, and using again (\ref{eq:firstcons})
the result is that $x_3+ y_3 = 0$ iff $x_2/y_1 = y_2/x_1 = z_2/z_1$.  
Taking into account that gcd$(x_i, y_i, z_i) = 1$, the latter condition is equivalent to $x_2 =y_1 $ and $y_2 = x_1$.

Analogously, $x_3+ z_3 = 0$ iff $x_2 =z_1 $ and $z_2 = x_1$. This means that $\{ x_3, y_3, z_3 \} $ is not vectorlike unless
the two initial primitive solutions become identical (possibly up to an overall sign) after one transposition.  
\end{proof}

It is straightforward to extend Proposition~\ref{propositionMerger} to include non-vectorlike solutions that have one variable equal to 0, as given in Corollary \ref{Cor2}. 
Note also that Proposition~\ref{propositionMerger} does not preclude that $\{ x_3, y_3, z_3 \} {\cal R}$ equals to $\pm \{ x_i, y_i, z_i \} $
after a reordering of $x_i, y_i, z_i $. The next corollary takes advantage of the fact that a primitive solution is different than the one obtained by 
any reordering, when no two variables are equal.

\begin{corollary}\label{propositionSelf}
 If   $\{ x_1, y_1, z_1 \}$ is a primitive solution to Eq.~(\ref{eq:general3variables}) for $a \in \mathbb{Z}$,
 then  a non-vectorlike  solution is  
  \be   \hspace*{-0.3cm}
 \{ x, y, z \} = \left\{  3\,   \frac{1 \! - a  w^2 }{a - 1} \!   -   
 \frac{x_1}{ y_1 + z_1 }   \! -   \frac{  y_1 }{ z_1+ x_1  }  
 \; \; ,  \; \;
 w  x     + \frac{ z_1-w \, y_1 }{ z_1 + x_1 }  
 \; \; ,  \; \;
1 - y     \,    \right\}    ~,
  \label{eq:PropSelf}
 \ee
 where  \\ [-0.7cm]
 \be
w =  \frac{  x_1  \, y_1  -  z_1^2 }{ \left(   x_1  -   y_1  \right)  \left(   x_1  +   y_1  +  z_1 \right)     }   ~~.
 \label{eq:defs-self}
 \ee
 For $xyz\neq 0$, this solution becomes primitive upon a rational rescaling, and 
 is different from $\{ x_1, y_1, z_1 \}$  iff  $x \neq  x_1 / (y_1+z_1) $.
\end{corollary}
\begin{proof}
Replacing   $\{ x_2, y_2, z_2 \} $  by $\{ y_1, z_1, x_1 \} $ in (\ref{eq:Propx3y3z3}) and (\ref{eq:Propx3y3z3defs}), 
and removing an overall factor of $a-1$, 
we  obtain (\ref{eq:PropSelf}), so Proposition~\ref{propositionMerger} guarantees that  $\{ x, y, z \}$ is a solution. 
For integer $a$, Theorem \ref{theorema9} ensures that the variables $x_1, y_1, z_1$ are all different,
so that $\{ x_1, y_1, z_1 \} $  differs from $\{ y_1, z_1, x_1 \} $ by two transpositions. 
Proposition~\ref{propositionMerger} then implies that $\{ x, y, z \}$ is not vectorlike. 
Thus, for $xyz\neq 0$ (which is guaranteed by Corollary \ref{Cor1} for $a\neq 3$), 
there exists ${\cal R} \in \mathbb{Q}$ such that $\{ x, y, z \} {\cal R} $ is primitive. 
From Proposition~\ref{propositionMerger} also follows that $\{ x, y, z \} {\cal R}  \neq  \{ x_1, y_1, z_1 \}$ iff  
$x \neq  x_1 / (y_1+z_1) $.
\end{proof}

The fact that two primitive solutions can be combined to produce a third one, as shown in 
Proposition~\ref{propositionMerger}, is related to the group law for addition of rational 
points on elliptic curves. Likewise, the use of a single primitive solution to 
construct a new one, as in (\ref{eq:PropSelf}), is related to the group addition of a rational point to itself. 
For any integer $a$, Theorem \ref{TheoremIntegerNa} ensures that if a primitive solution exists, then the 
number of primitive solutions is infinite. Using a single primitive solution $\{ x_1, y_1, z_1 \}$
to produce (\ref{eq:PropSelf}),  then reordering the new solution
and using it together with  $\{ x_1, y_1, z_1 \}$ to produce 
a third one, and repeating the last step indefinitely would generate an infinite set of primitive solutions.
For example, take $a=-1$ and the solution $\{ 6, -5, -4 \}$. The constructed solution (\ref{eq:PropSelf}),
after a rational rescaling is $\{ 1661, -1670, \, 339 \}$. 
Applying  (\ref{eq:Propx3y3z3}) to $\{ 6, -5, -4 \}$ and $\{ 1661, \, 339, -1670 \}$ gives the primitive  solution
$\{-16490494, \, 17520795, -8126864\}$, and so on.


\subsection{Coefficient $a$ as a perfect square -- applications to  physics}
\label{sec:asquare}

We now turn to Diophantine equations of the type (\ref{eq:general3variables}) 
in the special case where $a \!\in \mathbb{Z}$ is a perfect square.
Putting $a = N^2$, we seek primitive solutions to \\[-7.5mm]
\be
N^2  \left(x^3  + y^3  + z^3  \right) =   (x + y + z )^3  ~~,
\label{eq:asquare}
\ee
where $N \in \mathbb{Z}$, $N \ge 1$.
For $N=1, 2, 4, 5$ there are no primitive solutions, as proved in 
Sections \ref{sec:intro} and  \ref{sec:nosolution}.
For $N= 3$, the general solution to (\ref{eq:asquare}) is given in (\ref{eq:general9-deltas}).

An infinite family of equations, where $a$ is a perfect square, that allow primitive solutions 
is given by $a = k^6$ with $k \in \mathbb{Z}$, $k \neq 0, \pm 1$.
In that case, (\ref{eq:ajk}) implies that a primitive solution to (\ref{eq:asquare}) with $N = k^3$ is
\bear
&&  x =  \left( k^2 + 2  \right)  \left( k^4 +  k^2 + 4  \right)  ~~, 
\nonumber \\ [1mm]
&&  y =  -3  \left(k^4  + 2 k^2 +  3\right)     ~~,  
 \\ [1mm]
&&  z =    - k^6 - 3 k^4  - 6 k^2 + 1    ~~.
 \nonumber
\eear
In particular, $N = 8$ gives the solution $\{ 16, -9,-15\}$, after the \GCD \ is removed. 

Another infinite family is given by the Fibonacci numbers of even index,
\be
N = F_{2k} ~~~,
\label{eq:fib}
\ee
with $k \ge 2$, as follows from Proposition~\ref{PropositionFib}. The solution
\be 
x = 6 F_{2k}    \;\; ,\;\;    y = - 5  F_{2k}  -  F_{2k - 1}    \;\; ,\;\;    z =  - 4 F_{2k}   +  F_{2k-1}   ~~
\label{eq:Fib2k}
\ee
is primitive if $k$ is even, or becomes primitive after dividing by gcd$(x,y,z)=3$ if $k$ is odd.
For $k =2$, we obtain $N = 3$ and the third solution  of Table \ref{table:1}.
For $k =3$, $N = 8$  and the obtained solution is the same as that from the cubic family above.
For $k =4$, $N = 21$, and (\ref{eq:Fib2k}) gives the  solution $\{ 126, -118,-71\}$.

Eq.~(\ref{eq:asquare})  is important for particle physics \cite{Appelquist:2002mw, Cui:2017juz, Allanach:2019uuu}: $x,y,z$ are the charges of three right-handed neutrinos under a new $U(1)$ gauge group, and $N$ is the number of generations of standard fermions, whose $U(1)$ charges are assumed to be generation independent.
Eq.~(\ref{eq:asquare})  represents the only nontrivial combination of anomaly equations for 
a gauge group given by the direct product of the Standard Model $SU(3)\times SU(2)\times U(1)$ group and the new $U(1)$ group, in the presence of three fermions which are singlets under the Standard Model group.

Within the Standard Model,  particle physics experiments have established that the number of fermion generations is  
$N=3$, so that the general solution (\ref{eq:general9-deltas})  for $a = 9$ is particularly relevant. Nevertheless, additional hidden sector 
particles may have another $SU(3)\times SU(2)\times U(1)$ gauge group, and a different number of fermion generations, so that 
the results for an arbitrary integer $N \ge 1$ are still relevant for particle physics.

To understand why Eq.~(\ref{eq:asquare}) must be satisfied by the $U(1)$  charges of three right-handed neutrinos,
one needs to solve first the anomaly equations that arise from triangle diagrams involving two Standard Model gauge bosons and one 
$U(1)$ bosons. This fixes the $U(1)$ charges of all standard fermions in terms of two quark charges, labelled $z_q$ and $z_u$ \cite{Appelquist:2002mw}.
The remaining anomaly equations include one due to a diagram with two gravitons and one $U(1)$ boson,
\be
N \left( z_u - 4 z_q  \right)  = x + y + z  ~~,
\ee
and one due to a diagram with three $U(1)$ bosons,
\be
N \left( z_u  - 4 z_q \right)^3  = x^3 + y^3 + z^3    ~~.
\ee
Eliminating $z_u -4 z_q $ from these equations gives Eq.~(\ref{eq:asquare}).
The primitive solutions are the important ones for particle physics, as they represent the allowed charges for a set of chiral 
({\it i.e.}, non-vectorlike) fermions, which are the only fermions that are likely to be light enough to be 
within experimental reach. 

The family of solutions (\ref{eq:fib}) shows that there are anomaly-free gauge charge assignments
when the number of generations  is given by any Fibonacci number of even index.
Combining with the $N = k^3$ family of solutions, the number of fermion generations 
may belong to the infinite sequence: 3, 8, 21, 27, 55, 64, 125, 144, ...
There are, however,  many values for $N$ different than $k^3$ or $F_{2k}$.
A numerical search utilizing \texttt{PARI/GP} \cite{PARI2} shows that the number of fermion generations may belong to the sequence (truncated at $N \le 150$):
\bear
&& \hspace*{0.2cm} N\in\{3,8,10,17\!\!-\!23,25,27 \!\!-\! 29,32,34 \!\!-\! 39,42,43,47,50 \!\!-\! 53, 55,56,
 \\ && 
 \hspace*{1.1cm} 60, 61, 64 \!\!-\! 66, 69 \!\!-\! 77, 79,81,83,85,86,89, 92 \!\!-\! 95, 97,99, 105,
 \nonumber \\ && 
 \hspace*{1.1cm}
107,109, 111  \!\!-\!  119,122  \!\!-\!  125,127,129, 130, 133  \!\!-\! 135,137,138, 
 \nonumber \\ && 
 \hspace*{1.1cm}
141  \!\!-\!  145,147,148,
\ldots \, \}~~,  
\nonumber
\label{eq:asquareN}
\eear
where the dash between two numbers indicates that all the integers in that range are included in the sequence.
Theorem~\ref{TheoremIntegerNa} implies that the number of primitive solutions to Eq.~(\ref{eq:asquare}) 
is infinite for each value of $N$ in the above sequence.

\vspace*{2mm}

\section{Conclusions}    
\setcounter{equation}{0}\label{sec:conc}


We have solved Diophantine equations of the class ``cube of sum proportional to the sum of cubes''
(\ref{eq:general3variables}), 
for various infinite families of rational $a$ (see Section \ref{sec:infinite-solutions}).
We have proved that  for other infinite families of rational $a$  there exist no primitive solutions (see Section \ref{sec:a-2425}).
The properties of elliptic curves allow the use of numerical methods to decide the solvability for any fixed $a$  (see Section \ref{sec:elliptic}). 
Furthermore, the structure of the torsion group determines the number $n_a$ of primitive solutions.
When the rank of the elliptic curve is 0, $n_a$ can be at most 3 and there are five possible torsion groups, 
while for integer $a$ there are no primitive solutions and the  torsion group is $\mathbb{Z}_3$ (Theorem~\ref{TheoremIntegerNa}). 
Thus, when any primitive solutions exist for some integer $a$ (see, {\it e.g.}, Table \ref{table:alist10}), their number is infinite.
When a primitive solution is known, other ones can be constructed using the method presented in  Corollary \ref{propositionSelf}.
We have also shown that any elliptic curve with torsion group $\mathbb{Z}_{6}$ (for nonzero $j$-invariant),
$\mathbb{Z}_{9}$, $\mathbb{Z}_{12}$, or  $\mathbb{Z}_2 \times \mathbb{Z}_6$,
is a particular case of Eq.~(\ref{eq:general3variables}). 

The case where $a$ is the square of an integer is important for questions that arise in particle physics.
The anomaly equations for certain $U(1)$ gauge extensions of the Standard Model of particle physics lead to 
Eq.~(\ref{eq:general3variables}), where the variables are gauge charges of new fermions,
and the $U(1)$  charges of the quarks and leptons are generation independent.
In particular,  $a=9$ corresponds to  three generations of fermions, as  in the Standard Model, with one right-handed neutrino per generation. 
For $a = 9$ we have presented a  solution,  (\ref{eq:general9-deltas}), 
where the variables $x,\, y,\, z$ are given by homogeneous cubic polynomials in two integer parameters,
and then we have proven that this is the general solution.
More generally, if $a = N^2$, then there are solutions when $N$ is 
given by any Fibonacci number of even index, by any perfect cube (see Section \ref{sec:asquare}),
or by other integers belonging to the sequence shown in (\ref{eq:asquareN}). Seeking an analytic understanding of that sequence remains a challenge.

\vspace*{6mm}

\section*{Appendix: No primitive solutions for $a=4$}    
\setcounter{equation}{0}\label{sec:a4}
\renewcommand{\theequation}{A.\arabic{equation}}

\vspace*{1mm}

{\it This Appendix is based on Section 3.1 of Version v1 of this ArXiv paper; for brevity, this was not included in the version submitted to the journal Communications in Number Theory and Physics.}

\vspace*{2mm}

In this Appendix we prove using classical methods that Eq.~(\ref{eq:general3variables}) with $a = 4$ has no primitive solutions (for an alternative proof, see Theorem \ref{theoremnosolnp}).
A first step is the following Lemma, whose proof is based on infinite descent. 

\vspace*{2mm}

\begin{lemma}\label{lemmaa4}
Let  $ q, r, t \in \mathbb{Z}$ with $|q r t| > 1$. If \GCD$(q, r, t) =1$, then there are no solutions to the equation  
\be
q^4  - r^4 + q^2 r^2  = t^2 ~~.
\label{eq:lemma1}
\ee
\end{lemma}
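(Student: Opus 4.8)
The plan is to argue by infinite descent on a minimal hypothetical solution. First I would make the standard reductions: every variable occurs to an even power (or as $t^{2}$), so we may take $q,r,t>0$, and $\gcd(q,r,t)=1$ forces $q,r,t$ pairwise coprime, since a prime dividing two of them divides the third by the equation. A short congruence analysis then shows $q$ is odd — if $q$ were even, $r$ would be odd and the equation gives $t^{2}\equiv-1\pmod 4$ — hence $t$ is odd, and $r$ is either odd or divisible by $4$, the residue $r\equiv 2\pmod 4$ being excluded modulo $8$. Now suppose a triple satisfying the hypotheses exists and pick one whose first entry $q$ (necessarily odd, hence $\geq 3$) is minimal.

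Next comes the algebraic heart. Completing the square in $q^{2}$ rewrites the equation as $(2q^{2}+r^{2}-2t)(2q^{2}+r^{2}+2t)=5r^{4}$. The two factors are coprime away from the prime $5$: a common prime $p$ divides their difference $4t$ and their sum $2(2q^{2}+r^{2})$, so $p^{2}\mid t^{2}$ and, using $r^{2}\equiv-2q^{2}\pmod p$, $p\mid 5q^{4}$, which forces $p=5$ (as $p\mid q$ would give $p\mid r$). Tracking the power of $5$ shows it cannot sit in both factors, so when $r$ is odd one gets $\{2q^{2}+r^{2}-2t,\ 2q^{2}+r^{2}+2t\}=\{5k^{4},\ j^{4}\}$ as an unordered pair, with $jk=r$ and $\gcd(j,k)=1$ (the sub-case $5\mid r$ is absorbed, the whole $5$-part of $5r^{4}$ landing in one factor). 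Adding the two equations gives $4q^{2}+2r^{2}=5k^{4}+j^{4}$, i.e. the Pythagorean relation $(2q)^{2}=(2k^{2})^{2}+(k^{2}-j^{2})^{2}$; since $j,k$ are odd and coprime, dividing by $4$ yields a primitive Pythagorean triple $q^{2}=(k^{2})^{2}+\big((k^{2}-j^{2})/2\big)^{2}$.

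Now I would run the classical Pythagorean parametrization two more times. Writing $q=g^{2}+h^{2}$, $k^{2}=g^{2}-h^{2}$ with $\gcd(g,h)=1$ of opposite parity, factoring $k^{2}=(g-h)(g+h)$ into coprime factors forces $g-h=\sigma^{2}$, $g+h=\tau^{2}$ with $\sigma,\tau$ odd and coprime, $\sigma\tau=k$. This gives $2q=\sigma^{4}+\tau^{4}$, and matching the even leg gives $k^{2}-j^{2}=\pm(\tau^{4}-\sigma^{4})$; substituting $k^{2}=\sigma^{2}\tau^{2}$ leaves $j^{2}=\sigma^{4}+\sigma^{2}\tau^{2}-\tau^{4}$ or $j^{2}=\tau^{4}+\tau^{2}\sigma^{2}-\sigma^{4}$ — either way a solution of the original equation with coprime entries, first entry $\sigma$ or $\tau$ odd, $j\neq 0$ (since $\sigma^{2}(\sigma^{2}+\tau^{2})=\tau^{4}$ is impossible), and first entry $\neq 1$ (which would force $q=1$). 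Since $2q=\sigma^{4}+\tau^{4}$ makes $\sigma,\tau<q$ once $q\geq 3$, this new solution satisfies the hypotheses of the Lemma with a strictly smaller first entry, contradicting minimality. The case $r\equiv 0\pmod 4$ is treated the same way after pulling a factor of $4$ out of each of $2q^{2}+r^{2}\mp 2t$: one again lands on a primitive Pythagorean triple with hypotenuse $q$, and two further parametrizations produce a solution $t'^{2}=q'^{4}+q'^{2}r'^{2}-r'^{4}$ with $\gcd(q',r')=1$ and $q'<q$; if $q'$ is even this already contradicts the parity obstruction of the first step, and if odd it contradicts minimality.

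The main obstacle is bookkeeping rather than any new idea: one must follow the powers of $2$ and of $5$ carefully through $(2q^{2}+r^{2}-2t)(2q^{2}+r^{2}+2t)=5r^{4}$ in both parity regimes, decide correctly which factor carries the $5$ and which the extra power of $2$, and keep track of the $\pm$ ambiguities in the three Pythagorean parametrizations — checking in every branch that the output triple either descends (a smaller odd $q'\geq 3$) or is killed outright by the modulo-$4$ obstruction. The same argument can be packaged more smoothly in $\mathbb{Z}[i]$: factoring $(2t)^{2}+(q^{2}-2r^{2})^{2}=5q^{4}$ as $\big(2t+(q^{2}-2r^{2})i\big)\big(2t-(q^{2}-2r^{2})i\big)=(2+i)(2-i)\,q^{4}$, unique factorization (using that $q$ has no prime factor $\equiv 3\pmod 4$, else the two factors share a prime $\neq 5$) gives $2t+(q^{2}-2r^{2})i=\varepsilon\,(2\pm i)(c+di)^{4}$ with $c^{2}+d^{2}=q$ and $\gcd(c,d)=1$, and expanding the fourth power reproduces exactly the descent above.
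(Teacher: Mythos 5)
Your proof is correct and follows essentially the same route as the paper's: complete the square to get $(2q^2+r^2-2t)(2q^2+r^2+2t)=5r^4$, split the two (essentially coprime) factors into $5r_1^4$ and $r_2^4$, parametrize the resulting Pythagorean relation $(2q)^2=(2r_1^2)^2+(r_1^2-r_2^2)^2$, and land back on the original quartic for an infinite descent --- yours run on $q$ via $2q=\sigma^4+\tau^4$, the paper's on the minimal value of $r^2$. Your extra bookkeeping for $r\equiv 0 \pmod 4$ and for $5\mid r$ is a genuine improvement in rigor, since the paper's blanket assertion that $k+2t$ and $k-2t$ are coprime tacitly assumes $r$ odd; apart from that the two arguments coincide.
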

\begin{proof}
The quadratic equation (\ref{eq:lemma1}) in $q^2$ has the discriminant given by $\Delta = 5 r^4 + 4 t^2$. 
Thus, a necessary condition for the existence of integer solutions to (\ref{eq:lemma1}) is $\Delta = k^2$ where $k \in \mathbb{Z}$ and $k \ge 3$.
This implies that $k$ must satisfy two equations: \\ [-7mm]
\bear
& k  = 2 q^2 + r^2  ~~, 
\label{eq:lemma-q}
\\ [2mm]
& \left( k + 2t \right) \left( k - 2t \right)  = 5 \, r^4  ~~.
\label{eq:lemma-split}
\eear
Since  \GCD$(q, r, t) =1$,  Eq.~(\ref{eq:lemma1})  requires \GCD$(r, t) =1$, so that 
$k + 2t$ and $k - 2t$ are coprime. 
As a result, Eq.~(\ref{eq:lemma-split}) splits into two equations; up to 
a sign flip $t \leftrightarrow -t$, these  can be written as  
\be
k + 2t = 5 r_1^4   \;\;  ,  \;\;  k - 2t = r_2^4  ~~,
\label{eq:lemma-2eqs}
\ee
with $r = r_1 r_2$ and \GCD$(r_1, r_2) =1$.
Since $k + 2t$ and $k - 2t$ have the same parity, both $r_1$ and $r_2$ must be odd.
From Eqs.~(\ref{eq:lemma-q}) and  (\ref{eq:lemma-2eqs}) it follows that 
\be
\left( r_1^2  -  r_2^2 \right)^2  +   4 r_1^4  = 4 q^2  ~~.
\ee
Thus, the three integers $\left| r_1^2  -  r_2^2 \right|$, $2 r_1^2 $, and $2 |q|$ form a Pythagorean triple, which implies 
that there must exist $\ell, m  \in \mathbb{Z}$ such that 
\be
2 |q| = \ell^2 + m^2  ~~,
\ee
and either $ r_1^2  -  r_2^2  = 2 \ell m $, $2 r_1^2  = \ell^2 - m^2$, or  $r_1^2  -  r_2^2 = \ell^2 - m^2  $, $2 r_1^2  =  2 \ell m$.
The first case can be dismissed by noticing that the integer equation $2 r_1^2  = (\ell + m)(\ell - m)$ has no solutions
with $r_1$ odd because $\ell + m$ and $\ell - m$ have the same parity. Let us analyze the second case, which can be written as 
\bear
r_1^2  & \! = \! & \ell m  ~~,
\label{eq:r1lm}
\\ [2mm]
 r_2^2 & \! = \! &  \ell m + m^2  - \ell^2      ~~.
\label{eq:r2lm}
\eear
It is sufficient to consider $ \ell, m \ge 1$. Since \GCD$(r_1, r_2) =1$, the above two equations imply \GCD$(\ell, m) =1$. As a consequence,
Eq.~(\ref{eq:r1lm}) requires both $\ell$ and $m$ to be perfect squares. Let $\ell_0$ and $m_0$ be nonzero integers such that $\ell = \ell_0^2$ and $m= m_0^2$.
From Eq.~(\ref{eq:r2lm}) it follows that
\be
m_0^4 - \ell_0^4 +   m_0^2  \ell_0^2        =  r_2^2    ~~.
\ee
Note that this equation is identical with Eq.~(\ref{eq:lemma1}) upon substituting the variables $m_0, \ell_0, r_2$ with
$q,r,t$, respectively.
If Eq.~(\ref{eq:lemma1}) has any solutions, then there is a minimum value of $r^2 \neq 0$ that satisfies the equation. In particular, that  value 
is the minimum one provided $r^2 \leq \ell_0^2$. At the same time, 
the minimum value of $r^2$ can be written as $r^2 = r_1^2  \, r_2^2 =  \ell_0^2 m_0^2  \, r_2^2  \ge  \ell_0^2$.
These conditions are simultaneously satisfied only if  $r^2 = \ell_0^2$, implying $m_0^2 = \ell_0^2 = 1$, which gives
$|q| = |r| = |t| = 1$. This is in contradiction with $|q r t| > 1$. 
 \end{proof}

\smallskip\smallskip

\begin{theorem}\label{theorema4}
The cubic Diophantine equation 
   \be
4 \left(x^3  + y^3  + z^3  \right) =   (x + y + z )^3  ~~
\label{eq:a4n3}
  \ee
has no primitive solutions.
\end{theorem}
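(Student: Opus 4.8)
The plan is to transform Eq.~(\ref{eq:a4n3}) into an instance of the quartic equation ruled out by Lemma~\ref{lemmaa4}. First I would use $(x+y+z)^3 = x^3+y^3+z^3 + 3(x+y)(y+z)(z+x)$ to rewrite the $a=4$ equation in the equivalent form
\[
x^3+y^3+z^3 = (x+y)(y+z)(z+x) .
\]
Reducing mod $2$ shows that in any primitive solution exactly one variable is even (all three odd makes the left side odd and the right side even, and so does exactly one odd), so I may relabel and assume $x$ even, $y,z$ odd. Isolating $x^3$ and using $y^3+z^3=(y+z)(y^2-yz+z^2)$ gives $x^3=(y+z)\bigl[x^2+x(y+z)-(y-z)^2\bigr]$; since $x$, $y+z$, $y-z$ are all even here, I set $x=2\xi$, $y+z=2\sigma$, $y-z=2\tau$ to obtain the cleaner relation
\[
\xi^3=\sigma\,(\xi^2+\sigma\xi-\tau^2),
\]
with $\gcd(\xi,\sigma,\tau)=1$ inherited from $\gcd(x,y,z)=1$, and with $\sigma,\tau$ of opposite parity because $y=\sigma+\tau$ and $z=\sigma-\tau$ are odd. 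The cases $\sigma=0$ (then $y+z=0$ forces $x=0$) and $\xi=0$ ($x=0$) are incompatible with a primitive solution, so henceforth $\sigma,\xi\neq0$.

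The heart of the argument is a descent-style coprimality analysis of $\xi^3=\sigma(\xi^2+\sigma\xi-\tau^2)$. With $g=\gcd(\xi,\sigma)$, $\xi=g\xi_1$, $\sigma=g\sigma_1$, $\gcd(\xi_1,\sigma_1)=1$, the relation forces $\sigma_1\mid g^2$; writing $g^2=\sigma_1 k$ it reduces to $\xi^2+\sigma\xi-\tau^2=k\xi_1^3$ and hence to
\[
\tau^2=k\,\xi_1\,(\sigma_1^2+\sigma_1\xi_1-\xi_1^2).
\]
Since $k\mid g^2$, $\gcd(g,\tau)=1$ and $k\mid\tau^2$, one gets $k=\pm1$, so $\sigma_1=\pm g^2$ and in particular $\gcd(\xi_1,g)=1$. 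When $\tau\neq0$ the coprime factors $\xi_1$ and $\sigma_1^2+\sigma_1\xi_1-\xi_1^2$ multiply to $\tau^2$, so each is $\varepsilon$ times a perfect square for one common sign $\varepsilon=\pm1$; writing $\xi_1=\varepsilon n^2$ and substituting $\sigma_1=\pm g^2$, each of the four sign combinations lands precisely on the equation $q^4-r^4+q^2r^2=t^2$ of Lemma~\ref{lemmaa4}, with $\{q,r\}=\{g,n\}$ and $t$ the remaining square root, and with $\gcd(q,r,t)=1$ because $\gcd(g,n)=1$.

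It then remains to verify the size hypothesis $|qrt|>1$ of Lemma~\ref{lemmaa4}, after which the Lemma gives the contradiction. Here $|qrt|=|gnm|$ with $m^2$ the relevant square, and this can fail only if $g=0$ (forces $y+z=0$, hence $x=0$), $n=0$ (forces $x=0$), $m=0$ (forces $\tau=0$), or $|g|=|n|=|m|=1$; the last makes $\sigma=\pm1$ and $\tau=\pm1$ both odd, contradicting their opposite parity. The leftover case $\tau=0$, i.e.\ $y=z$, is dispatched directly: Eq.~(\ref{eq:a4n3}) then reduces to $x^2-2xy-4y^2=0$, with no solution when $xy\neq0$. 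I expect the main obstacle to be keeping this descent compact and rigorous: the algebra in each branch is routine, but the signs $k=\pm1$, $\varepsilon=\pm1$ and the degenerate sub-cases must be organised uniformly, and one must check that primitivity is really used wherever a variable is declared nonzero.
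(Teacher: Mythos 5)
Your proposal is correct and follows essentially the same route as the paper: the same parity reduction to $\xi^3=\sigma(\xi^2+\sigma\xi-\tau^2)$ with $\gcd(\xi,\sigma,\tau)=1$ and $\sigma,\tau$ of opposite parity, followed by a descent showing $\sigma=\pm g^3$ and $\xi_1=\pm n^2$ so that the relation collapses onto the quartic of Lemma~\ref{lemmaa4}. The only differences are organizational (gcd bookkeeping in place of the paper's prime-multiplicity argument, and funneling the degenerate cases into the $|qrt|>1$ check and the parity of $\sigma,\tau$ rather than treating $s_+=1$ and $r_0=\pm1$ separately), and your sign-tracking does work out: the cross-term sign and the right-hand-side sign coincide, so all four combinations reduce to the Lemma with $\{q,r\}=\{g,n\}$.
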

\begin{proof}
If $\{x, y, z\}$ is a primitive solution to Eq.~(\ref{eq:a4n3}), then 
the set $\{x, y, z\}$ is coprime, so at least one variable must be odd.
There is no solution with exactly one or three odd variables because the right-hand side must be a multiple of 4.
The remaining possibility is that there are exactly two odd variables. Without loss of generality, take
$x = 2 s_1$, $y= s_+ +s_-$, $z = s_+ - s_-$,  with $s_1, s_\pm \in \mathbb{Z}$,  $s_+  \ge 1$, $s_1 \neq 0$ and $s_+,s_-$ of opposite parity.
As primitive solutions have $xyz\neq 0$,  Eq.~(\ref{eq:a4n3}) has no rational primitive solutions with $y = z$, so that $s_-\neq 0$. 
From \GCD$(x, y, z) = 1$ follows that \GCD$(s_1, s_+, s_- ) = 1$. 
Using the $s_1, s_\pm$ variables, Eq.~(\ref{eq:a4n3})  becomes
\be
s_1^3 = s_+ \left(   s_1 s_+  + s_1^2   - s_-^2  \right)   ~~.
\label{eq:a4n3s}
\ee

If $s_+  = 1$, then $s_-$ is even and   $s_-^2  =s_1 ( 1+ s_1 - s_1^2 )$.  
The latter equation may have solutions only if $- s_1$ is a perfect square and $s_-^2  = - \ell^2  s_1$  with $\ell \in \mathbb{Z}$.
Thus, $\ell^2 = s_1^2 - s_1 - 1$  which can be written as $ (2s_1-1)^2 - 4\ell^2 = 5$. The only solutions to this equation, consistent with $- s_1$ being a perfect square, are $\ell = \pm 1$, $s_1 =-1$, 
so that $s_-^2 = 1$,  in contradiction with $s_-$ even.

For $s_+  \ge  2$, let $p \ge 2$ be a prime divisor of $s_+$. 
Let $n_p \ge 1$ be  the multiplicity of $p$ in the prime factorization of $s_+ $.
If  $3\nmid n_p$, then Eq.~(\ref{eq:a4n3s}) implies that both $s_1$ and $s_1 s_+ +  s_1^2 - s_-^2$ are multiples of $p$. In that case, $s_-$ is a multiple of $p$, which is in contradiction with \GCD$(s_1, s_+, s_- ) = 1$. 
Thus,  $3|n_p$   for any prime factor, so that  $s_+$ is a perfect cube: 
$s_+ = q_\star^3$ with $q_\star \in \mathbb{Z}$ and $q_\star \ge 2$. From Eq.~(\ref{eq:a4n3s}) follows that $s_1 = r_0 q_\star$  with $r_0 \in \mathbb{Z}$, $r_0 \neq 0$,  and  
\be
s_-^2 =   r_0 (q_\star^4  + q_\star^2 \, r_0 - r_0^2)   ~~.
\label{eq:srq}
\ee

If $r_0   = \pm 1$, then Eq.~(\ref{eq:srq}) becomes $ (2 q_\star^2 \pm 1)^2 \mp 4s_-^2  = 5 $,  
and the only solutions require $q_\star^2 = s_-^2 = 1$.
These solutions, however, are not allowed by the condition that $s_+$ and $s_-$ have opposite parity. 

For $|r_0| \ge 2$, let $p' \ge 2$ be a prime divisor of $r_0$ of multiplicity $n_{p'} \ge 1$.  If $n_{p'} $ were odd, then  Eq.~(\ref{eq:srq})  would imply that both $s_-$ 
and $q_\star^4  + q_\star^2 \, r_0 - r_0^2$ are multiples of $p'$, leading to \GCD$(q_\star, r_0, s_-) \ge p' $, which  is not allowed by  \GCD$(s_1, s_+, s_- ) = 1$. 
Thus, $n_{p'} $  is even, so that $|r_0|$ is a perfect square: $|r_0| = r_\star^2$ with $r_\star \in \mathbb{Z}$ and $r_\star \ge 2$.   
  Eq.~(\ref{eq:srq})  implies   $s_- = t \, r_\star $ with $t \in \mathbb{Z}$, $t \neq 0$, 
  and  now takes the form  
   \be
q_\star^2 \,  r_\star^2  +  \frac{r_0}{|r_0|}  \left( q_\star^4  -  r_\star^4 \right)  =  t^2 ~~.
\label{eq:a4n3oddstar}
 \ee 
Since \GCD$(s_1, s_+, s_- ) = 1$ implies \GCD$(q_\star, r_\star, t) = 1$.
Comparing with the notation of  Eq.~(\ref{eq:lemma1}), the substitutions
$q_\star \to q$ and $r_\star \to r$ for $r_0 > 0$, or $q_\star \to r$ and $r_\star \to q$ for $r_0 < 0$, 
applied to (\ref{eq:a4n3oddstar}) show that Lemma \ref{lemmaa4} forbids any primitive solutions.
\end{proof}

\bigskip

\smallskip


\end{document}